\definecolor{black}{rgb}{0.0, 0.0, 0.0}
\definecolor{red}{rgb}{1.0, 0.5, 0.5}
\title[   ]{Asymptotic behavior toward viscous shocks for the outflow problem of barotropic Navier-Stokes equations}
\author[Kang]{Moon-Jin Kang}
\author[Oh]{HyeonSeop Oh}
\author[Wang]{Yi Wang}
\address[Moon-Jin Kang]{
\newline Department of Mathematical Sciences, \newline Korea Advanced Institute of Mathematical Sciences, Daejeon 34141, Korea}
\email{moonjinkang@kaist.ac.kr}
\address[HyeonSeop Oh]{
\newline Department of Mathematical Sciences, \newline Korea Advanced Institute of Mathematical Sciences, Daejeon 34141, Korea}
\email{ohs2509@kaist.ac.kr}
\address[Yi Wang]{
\newline State Key Laboratory of Mathematical Sciences and Institute of Applied Mathematics, AMSS, CAS, Beijing 100190, P. R. China \newline and School of Mathematical Sciences, University of Chinese Academy of Sciences, Beijing 100049, P. R. China
}
\email{wangyi@amss.ac.cn}
\newtheorem{theorem}{Theorem}[section]
\newtheorem{remark}{Remark}[section]
\newcommand{\e}{\varepsilon}
\numberwithin{figure}{section}
\newcommand{\beq}{\begin{equation}}
\newcommand{\eeq}{\end{equation}}
\newcommand{\bsp}{\begin{split}}
\newcommand{\esp}{\end{split}}
\newcommand{\RR}{{\mathbb R}}
\newcommand{\s}{\sigma}
\newcommand\adots{\mathinner{\mkern2mu\raise1pt\hbox{.}
\mkern3mu\raise4pt\hbox{.}\mkern1mu\raise7pt\hbox{.}}}
\newtheorem{theo}{Theorem}[section]
\newtheorem{prop}[theo]{Proposition}
\newtheorem{lem}[theo]{Lemma}
\def\charf {\mbox{{\text 1}\kern-.30em {\text l}}}
\newcommand{\util}{{\widetilde{u}}}
\def \l {\lambda}
\def \nb {\nabla}
\def \m {\mu}
\def \rd {\partial}
\def \d {\delta}
\def \O {\Omega}
\def \r {\rho}
\def \rhotil {\widetilde{\rho}}
\def \b {\beta}
\def \g {\gamma}
\def \Rp {\mathbb{R}_+}
\def \intRp {\int_{\mathbb{R}_+}}
\newcommand{\norm}[1]{\left\|#1\right\|}
\def \Gnew  {G^{\text{new}}}
\def \UtilX {\widetilde{U}^{X, \beta}}
\def \utilX {\widetilde{u}^{X, \beta}}
\def \rhotilX {\widetilde{\rho}^{X, \beta}}
\def \uubar {\underline{u}}
\def \rubar {\underline{\rho}}
\def \Util {\widetilde{U}}
\def \ptil {\widetilde{p}}
\begin{document}
\bibliographystyle{acm}

\date{\today}

\subjclass[2020]{35Q35, 76N06} \keywords{$a$-contraction with shift; asymptotic behavior; outflow problem; Navier-Stokes equations; viscous shock}

\thanks{\textbf{Acknowledgment.} M.-J. Kang and H. Oh were partially supported by the National Research Foundation of Korea  (RS-2024-00361663  and NRF-2019R1A5A1028324). Yi Wang was supported by NSFC (Grant No. 12171459, 12288201, 12090014, 12421001) and CAS Project for Young Scientists in Basic Research (Grant No. YSBR-031).
}

\begin{abstract}
We study 
 the large-time asymptotic stability of viscous shock profile to the outflow problem of barotropic Navier-Stokes equations on a half line. 
We consider the case when the far-field state as a right-end state of 2-Hugoniot shock curve belongs to the subsonic region or transonic curve.
We employ the method of $a$-contraction with shifts, to prove that if the strength of viscous shock wave is small and sufficiently away from the boundary, and if a initial perturbation is small, then the solution asymptotically converges to the viscous shock up to a dynamical shift. We also prove that the speed of time-dependent shift decays to zero as times goes to infinity, the shifted viscous shock still retains its original profile time-asymptotically.

Since the outflow problem in the Lagrangian mass coordinate leads to a free boundary value problem due to the absence of a boundary condition for the fluid density, we consider the problem in the Eulerian coordinate instead. Although the $a$-contraction method is technically more complicated in the Eulerian coordinate than in the Lagrangian one, this provides a more favorable framework by avoiding the difficulty arising from a free boundary.

\end{abstract}

\maketitle \centerline{\date}

\tableofcontents

\section{Introduction}
\setcounter{equation}{0}
We are concerned with the outflow problem of the one-dimensional compressible barotropic Navier-Stokes equations on a half line $\mathbb{R}_+ := (0,\infty)$:
\begin{equation}
\begin{aligned}\label{NS}
&\r_t + (\r u)_x = 0, \, \, & t \in \mathbb{R}_+, \, \, x \in \mathbb{R}_+,\\
&(\r u)_t + (\r u^2 + p(\rho))_x = \m u_{xx}, &
\end{aligned}
\end{equation}
subject to the initial condition \begin{equation}\label{IC}
(\rho, u)(0,x) = (\rho_0(x), u_0(x)), \quad x \in \mathbb{R}_+,
\end{equation}
with the non-vacuum initial density and the constant far-field condition:
\begin{equation}\label{FFC}
\inf_{x\in \mathbb{R}} \r_0(x)  >0,\quad (\r_0(x), u_0(x)) \to (\r_+, u_+), \quad \text{as }\ x\to +\infty ,\quad \r_+>0,
\end{equation}
and the outflow boundary condition
\begin{equation}\label{BC}
 u (t,0)=  u_- < 0 , \quad t>0.
\end{equation}
Here, $\rho(t,x)$ and $u(t,x)$ are the density and the velocity of the fluid respectively, $p(\rho)$ is the pressure, and $\mu>0$ is the  constant viscosity coefficient of the fluid. We consider the case of classical 
$\gamma$-law pressure satisfying
\[
p(\rho) = K \r^{\g}, \quad \g > 1,
\]
with $\g$ being the adiabatic constant and $K>0$ being fluid constant. For simplicity, we normalize $K = \mu = 1$.

Note that in \eqref{BC},  we only consider the simple outflow boundary case that $u_-$ is a negative constant for all $t>0$. Due to the outflow boundary condition \eqref{BC}, the density on the boundary $\rho(t,0)$ can not be prescribed, which is quite different from the case of the inflow boundary condition, that is, when $u_- >0$, where the density on the inflow boundary $\rho(t,0)$ should be imposed for the well-posedness of the initial and boundary value problem. Notice that for the impermeable wall problem: $u_-=0$ in \eqref{BC}, the density on the boundary can not be prescribed too.\\

There have been extensive studies on the initial-boundary value problems (IBVP) for inviscid and viscous conservation laws. For the inviscid case, Bardos-Le Roux-N\'ed\'elec \cite{BRN79} established the existence of the entropy solutions (in the sense of Kru{\v{z}}kov) to the IBVP for scalar conservation laws using a vanishing viscosity method. Recently, based on the boundary Riemann problem and boundary layer analysis in \cite{BS20}, Ancona-Marson-Spinolo \cite{AMS24} established the existence of solutions to the system of conservation laws, considering the boundary condition from the inviscid limit of hyperbolic-parabolic systems. We also mention the work of Hashimoto-Matsumura \cite{HM24}, which studied the inviscid limit of radially symmetric stationary solutions to the inflow and outflow problems for the Navier-Stokes system. For the further studies on IBVPs in conservation laws, we refer to \cite{AMA97,AC97,BS09,DL88,KSX97}.

While the above results focus on the inviscid case and the inviscid limit, this paper considers the viscous case, specifically the time-asymptotic stability of solutions to the outflow problem. The time-asymptotic behavior of the solution to the initial and boundary value problem \eqref{NS}-\eqref{BC} should be related to the Riemann problem of the corresponding inviscid Euler equations (\eqref{NS} with $\mu=0$) with the right end state $(\r_+, u_+)$ in \eqref{FFC} and all the possible left end states $(\r_-, u_-)$ consistent with the boundary condition \eqref{BC} at the boundary $x=0$. 
More specifically, as discussed by Matsumura \cite{MatBVP}, for the given states $(\r_+, u_+)$ and $u_-$ in the outflow problem \eqref{NS}-\eqref{BC}, the asymptotic profile is determined as follows. If there exists $\r_-$ such that 
$(\r_-, u_-)$ and $(\r_+, u_+)$ is connected by a Riemann solution as outgoing simple wave (i.e., moving from left to right), then the corresponding wave becomes the asymptotic profile. If such a $\r_-$ does not exist, the boundary layer solution  (stationary solution) may appear to compensate the gap.
In this case, the asymptotic profile is given by either the boundary layer or a superposition of the boundary layer and outgoing waves. Based on these criteria,
Matsumura \cite{MatBVP} and Kawashima-Zhu \cite{KZ-09} classified all the possible large-time asymptotic states for the outflow problem \eqref{NS}-\eqref{BC} according to the sign of $u_+$ and the sonic, subsonic, or supersonic region of the right-end state $(\r_+, u_+)$. Roughly speaking, the large-time asymptotic state of the outflow problem \eqref{NS}-\eqref{BC} can be described by one of the following waves: a viscous shock wave, a boundary layer solution, a rarefaction wave and the superposition of a boundary layer solution and a rarefaction wave. 

For the rigorous proof of the time-asymptotic stability of these wave patterns to the outflow problem \eqref{NS}-\eqref{BC}, Kawashima-Nishibata-Zhu \cite{KNZ}  first prove the case of transonic and supersonic boundary layer solution. Moreover, Costanzino-Humpherys-Nguyen-Zumbrun \cite{CHNZ09} and Nguyen-Zumbrun \cite{NZ09} proved the stability of the boundary layer solution based on the Evans function analysis. For further studies on boundary layer solutions in 1-dimensional or multi-dimensional settings, we refer to \cite{GMGK10, Toan10, NZ10,WYY2025}. Kawashima-Zhu \cite{KZ-09} prove the case of a single rarefaction wave and furthermore, they \cite{KZ-08} prove the case of the superposition wave of a boundary layer solution and a rarefaction wave, by the delicate analysis of the boundary terms. 
However, there is no result on the time-asymptotic stability of viscous shock wave to the outflow problem \eqref{NS}-\eqref{BC} due to the essential difficulties caused by the outflow boundary. 
The anti-derivative technique, as a classical method for the stability of viscous shock wave, is powerful to prove the stability of a single viscous shock wave to Cauchy problem. 
But, there is no any information for the anti-derivative variables on the boundary to the half space problem: inflow, outflow or impermeable wall cases. Therefore, it is hard to prove the time-asymptotic stability of viscous shock wave to the half space problem by using the anti-derivative techniques. 
For the impermeable wall problem and inflow problem, the problems in Eulerian coordinate can be represented in Lagrangian mass coordinate with a fixed boundary. 
Concerning the half space problem in Lagrangian coordinate, the constant shift arguments invented by Matsumura-Mei \cite{MM} for the stability of viscous shock  can be utilized to gain the exponential decay-in-time for the velocity anti-derivative variable on the impermeable wall boundary, which is enough to handle the boundary term for the energy stability analysis in anti-derivative regime. Then, this idea is extended to the inflow problem to prove the stability of a single viscous shock and even its superposition with the boundary layer solution by Huang-Matsumura-Shi \cite{HMS} based on the classical anti-derivative techniques and shift argument in \cite{MM}.

However, the outflow problem \eqref{NS}-\eqref{BC} in Eulerian coordinate is transformed to a free boundary problem in Lagrangian coordinate, since there is no density boundary condition, which makes the problem \eqref{NS}-\eqref{BC} more complicated in Lagrangian frame. So, it is more natural to investigate the outflow problem \eqref{NS}-\eqref{BC}  in the original Eulerian coordinate. Therefore, the anti-derivative techniques with the shift argument as in \cite{MM} is somewhat limited  to proving the viscous shock stability to the outflow problem.

In this paper, we aim to study the large-time behavior of the solution $(\r, u)$ to the outflow problem \eqref{NS}-\eqref{BC} toward the outgoing viscous shock (i.e., the shock speed is positive), when the viscous shock profile has a small jump amplitude and is sufficiently far away from the boundary.
For that, we employ the $a$-contraction method invented by Kang-Vasseur \cite{Kang-V-1, KV21}, for the contraction property of viscous shock. Based on this method, the $L^2$-perturbation of viscous shock is under control (see \cite{EEK25,HKK23,KVW23,KVW-NSF}). On top of that, the boundary terms can be controlled nicely by the energy method.

For the outgoing weak shock as an asymptotic profile, we consider the case $u_+ < u_-$ and the right end state $(\r_+, u_+)$ lies in the subsonic or transonic region, i.e.,
\begin{equation}\label{cond:U+}
(\r_+, u_+) \in \O_{sub}^- \cup  \Gamma_{trans}^-,
\end{equation}
where $\O_{sub}^- := \{(\r, u)|  \lambda_2(\rho,u):=u + \sqrt{p'(\rho)} > 0, \, \, u < 0\} $, and $\Gamma_{trans}^- := \{(\r, u)|  \lambda_2(\rho,u):=u + \sqrt{p'(\rho)} = 0\}$.
For a given $(\r_+, u_+) \in \O_{sub}^- \cup  \Gamma_{trans}^-$ with $u_+ < u_- < 0$, the values of $\r_-$ and the shock speed $\s$ can be uniquely determined by the following Rankine-Hugoniot (RH) conditions:
\begin{equation}
\begin{aligned}\label{RH}
&-\s (\r_+ - \r_-) + (\r_+ u_+ - \r_- u_-) = 0,\\
&-\s (\r_+ u_+ - \r_- u_-) + (\r_+ u_+^2 - \r_- u_-^2 + p(\r_+) - p(\r_-)) = 0,
\end{aligned} 
\end{equation}
 and the Lax entropy condition:
\beq\label{lax}
\lambda_2(\rho_+,u_+) < \s < \lambda_2(\rho_-,u_-).
\eeq
From those conditions, the shock speed $\s$ of 2-shock wave given by 
\[
\s = u_+ + \sqrt{\frac{\r_-}{\r_+}}\sqrt{\frac{p(\rho_+) - p(\rho_-)}{\rho_+ - \rho_-}}>0.
\]


The viscous 2-shock profile $(\rhotil(\xi), \util(\xi))$ connecting two states $(\r_\pm, u_\pm)$ with $\xi = x - \s t$ is given by the following ODE system:
\begin{equation}
\begin{aligned}\label{Vshock}
&-\s \rhotil' + (\rhotil \util)' = 0, & \text{where} \quad  ':= \frac{d}{d\xi},\\
&-\s (\rhotil \util)' +(\rhotil \util^2 + p(\rhotil))' = \util'',&  (\rhotil, \util)(\pm\infty)=(\r_\pm, u_\pm). 
\end{aligned} 
\end{equation}
We will prove the time-asymptotic stability of this viscous shock $(\rhotil(\xi), \util(\xi))$ for the half-line outflow problem \eqref{NS}-\eqref{BC}.

A single viscous shock can be an asymptotic profile, besides the subsonic or transonic region case as in \eqref{cond:U+} considered in the present paper, even for the supersonic region, that is,  $(\r_+, u_+)\in \O_{super}^-:= \{(\r, u)|  \lambda_2(\rho,u):=u + \sqrt{p'(\rho)} < 0
\}$, This case can happen when  the 2-Hugoniot curve starting from $(\r_+, u_+)\in \O_{super}^-$ crosses the sonic curve and intersects the line $u=u_-$, in addition, the speed of shock connecting $(\r_-, u_-)\in\O_{sub}^-$ to $(\r_+, u_+)\in \O_{super}^-$ is nonnegative. However, whenever we consider a weak shock, since the small jump strength depends on the given right state, the 2-Hugoniot curve starting from the right state in $\O_{super}^-$ cannot reach the region $\O_{sub}^-$ that a left state is in.
Thus, this case cannot be handled by our approach based on the $a$-contraction method that can be used for the stability of a weak viscous shock. 
We leave this case for the future study of the stability of large shock.

\subsection{Main results} We now state the main result on the global existence and long-time behavior toward the viscous shock wave of the outflow problem \eqref{NS}-\eqref{BC}.

\begin{theorem}\label{thm:main}
Assume $\g > 1$. For a given constant $(\r_+, u_+)$ satisfying $u_+ <0$ and \eqref{cond:U+}, there exist constants $\d_0, \e_0$ such that the following holds.

For any $u_- < 0 $ satisfying $u_->u_+$ and $|u_- - u_+| < \d_0$, let $\r_->0$ be the (unique) constant state such that $(\r_-, u_-)$ lies on the 2-Hugoniot curve. 
Let $(\rhotil(x - \s t), \util(x - \s t) )$ be the viscous 2-shock wave in \eqref{Vshock} connecting $(\r_-, u_-)$ to $(\r_+, u_+)$ with $\rhotil(0)= (\rho_- + \rho_+)/2$. 
Then, there exists $\beta>0$ large enough (depending only on the shock strength $|u_- - u_+|$) such that the following holds.  
Let $(\r_0, u_0)$ be any initial data such that 
\begin{equation}\label{initial perturbation}
\norm{(\r_0, u_0) - (\r_+, u_+)}_{L^2(\b, \infty)} + \norm{(\r_0, u_0) - (\r_-, u_-)}_{L^2(0, \b)} + \norm{(\rd_x \r_0, \rd_x u_0)}_{L^2(\Rp)} < \e_0.
\end{equation}
Then, the outflow problem \eqref{NS} - \eqref{BC} 
admits an unique global-in-time solution $(\r, u)(t,x)$ satisfying  for a Lipschitz shift $t \mapsto X(t)$ 
\[
\begin{aligned}
    &(\r, u)(t,x) - (\rhotil, \util)(x - \s t - X(t) - \b) \in C([0,\infty);H^1(\Rp)),\\
    &u_{xx}(t,x) - \util_{xx}(x - \s t - X(t) - \b) \in L^2(0,\infty;L^2(\Rp)).
\end{aligned}
\]
Moreover, the solution asymptotically converges to the viscous shock wave, and the speed of the shift tends to zero:
\begin{equation}\label{asym-U}
    \sup_{x \in \Rp} \big|(\r, u)(t,x) - (\rhotil, \util)(x - \s t - X(t) - \b) \big| \to 0, \quad {\it as}\quad t \to +\infty,
\end{equation}
and
\begin{equation}\label{asym-X}
    \lim_{t \to \infty} |\dot{X}(t)| = 0.
\end{equation}
\end{theorem}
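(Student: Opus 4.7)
The plan is to implement the $a$-contraction with shifts method in the Eulerian coordinate, adapted to the half-line $\Rp$ with the outflow boundary condition \eqref{BC}. The central structural fact is that because the shock profile is pushed away from the boundary by the large constant $\b$, the shifted profile $(\rhotil,\util)(x-\s t-X(t)-\b)$ evaluated at $x=0$ is exponentially close to $(\r_-,u_-)$, so its boundary values and all derivatives are negligible. In particular, the boundary condition $u(t,0)=u_-$ is, up to an exponentially small correction, exactly the statement that the perturbation $u-\util(\cdot-\s t-X(t)-\b)$ vanishes on $x=0$; this is what unlocks the boundary analysis in Eulerian coordinates.

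I would first establish local-in-time existence of the perturbation in $H^1(\Rp)$ together with a Lipschitz shift $t\mapsto X(t)$ defined by an ODE of the form
\[
\dot X(t) \;=\; -\frac{M}{\d_0}\int_{\Rp} a\big(x-\s t-X(t)-\b\big)\,\big(u(t,x)-\util(x-\s t-X(t)-\b)\big)\,\util'(x-\s t-X(t)-\b)\,dx,
\]
for a suitable weight $a(\xi)$ whose slope across the shock layer is of order $\d_0:=|u_--u_+|$, in the spirit of \cite{Kang-V-1,KV21}, and a sufficiently large constant $M$. The pair $(a,X)$ is chosen so that the leading-order cross-terms arising in the relative entropy estimate are absorbed into a good quadratic form. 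Local well-posedness for the perturbation follows from a standard fixed-point argument, and the Cauchy--Lipschitz theorem applied to the ODE above yields a Lipschitz shift $X(\cdot)$.

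Next comes the core a priori estimate: for small initial data I would propagate
\[
\nntt \;:=\; \sup_{t\in[0,T]}\Big(\norm{(\r,u)(t)-(\rhotil,\util)(\cdot-\s t-X-\b)}_{H^1(\Rp)}^2 + |\dot X(t)|^2\Big)+\int_0^T \cD(t)\,dt \;\le\; C\,\e_0^2,
\]
where $\cD(t)$ is a dissipation functional controlling $\norm{\sqrt{|\util'|}\,(\r-\rhotil,u-\util)(\cdot-\s t-X-\b)}_{L^2}^2 + \norm{\partial_x(u-\util(\cdot-\s t-X-\b))}_{L^2}^2$ plus a hyperbolic piece produced by the variation of the weight $a$. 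At the relative entropy level I compute $\tfrac{d}{dt}\int_{\Rp} a\,\eta(U\,|\,\widetilde U)\,dx$ with $U=(\r,u)$ and $\widetilde U=(\rhotil,\util)(\cdot-\s t-X-\b)$. After careful algebra, as in \cite{KV21,KVW-NSF}, the interior contribution splits into (i) a negative-definite quadratic form in the perturbation, produced by combining the $a$-variation with the shift ODE and a sharp Poincar\'e-type inequality on the shock layer, (ii) higher-order error terms bounded by $\d_0\,\cD$, and (iii) boundary terms at $x=0$ consisting of the entropy flux $-u_-\,\eta(U\,|\,\widetilde U)|_{x=0}$, a pressure flux, and a viscous trace $-u_x\,(u-u_-)|_{x=0}$. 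The viscous trace vanishes identically by $u(t,0)=u_-$; the entropy flux has a favorable sign because $u_-<0$ and \eqref{cond:U+} forces $a(-\s t-X-\b)>0$; and the remaining pressure flux is exponentially small in $\b$ via $|(\rhotil,\util)(-\s t-X-\b)-(\r_-,u_-)|\lesssim e^{-c\b}$. A standard parabolic energy estimate for $\partial_x(u-\util(\cdot-\s t-X-\b))$, whose boundary terms are again tamed by the separation $\b$, closes the first-order bound, and a continuation argument produces the global solution.

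The main obstacle is exactly this boundary analysis at $x=0$. Unlike the Lagrangian impermeable-wall and inflow settings of \cite{MM,HMS}, where antiderivative variables on a fixed boundary decay exponentially and drive the argument, here we have neither antiderivative quantities nor any density information at $x=0$. Closing the boundary terms therefore hinges on (a) the favorable sign of $-u_-\,\eta(U\,|\,\widetilde U)|_{x=0}$ coming from $u_-<0$, and (b) the exponential smallness in $\b$ of $(\rhotil,\util)-(\r_-,u_-)$ at the boundary, which together allow every otherwise uncontrolled boundary term to be absorbed into $\cD$. Once $\int_0^\infty \cD(t)\,dt<\infty$ and $\nntt\le C\e_0^2$ are established, the asymptotic decay \eqref{asym-U} follows from a Sobolev-interpolation / Lions-type argument based on a uniform bound for $\tfrac{d}{dt}\cD(t)$, and \eqref{asym-X} follows by bounding $|\dot X(t)|$ via the integrable dissipation through the shift ODE and using that each term on its right-hand side tends to zero as $t\to\infty$.
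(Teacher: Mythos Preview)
Your overall architecture is the same as the paper's, but the boundary analysis contains a concrete error that breaks the closure of the $L^2$ estimate. You write the viscous boundary trace as $-u_x(u-u_-)|_{x=0}$ and declare it ``vanishes identically by $u(t,0)=u_-$''. This is not the trace that appears. The diffusion in the relative entropy computation is $\int_{\Rp} a\,(u-\util)(u-\util)_{xx}\,dx$, and integration by parts produces the boundary term
\[
-\,a(t,0)\,(u-\util)(t,0)\,(u-\util)_x(t,0),
\]
which involves $(u-\util)(t,0)=u_- - \util(-\s t - X(t)-\b)$, not $(u-u_-)(t,0)$. This factor is exponentially small in $\b$, but it is not zero, and multiplying it by $(u-\util)_x(t,0)$ gives a term with no sign. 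The paper (Lemma~4.6) controls it via interpolation and Young's inequality, obtaining
\[
\int_0^t \mathcal{P}\,ds \;\le\; C e^{-C\d\b} \;+\; C\e^2 \int_0^t \norm{(u-\util)_{xx}}_{L^2(\Rp)}^2\,ds.
\]
The second term on the right is the second-order dissipation $D_{u2}$, which is not produced at the $L^2$ level. Consequently the zeroth-order estimate does \emph{not} close by itself: it must be coupled to the full $H^1$ estimates, where $D_{u2}$ is generated by the $\psi_{xx}$-multiplier argument on the momentum equation and then used to absorb this boundary remainder. Your proposal treats the $H^1$ step as a ``standard parabolic energy estimate'' decoupled from the $L^2$ step, which does not work here; the paper's Lemmas~5.1--5.2 are precisely the device that closes the loop through $D_{u2}$.

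A second, related omission: the $H^1$ control of the density perturbation $\phi=\r-\rhotil$ is not parabolic and requires its own mechanism. The paper obtains $\int_0^t D_\rho\,ds$ and a favorable boundary term $G^{bd}=-\tfrac12 u_-(\phi_x/\r)^2|_{x=0}>0$ by combining the differentiated continuity equation with a cross-term $\int \psi\,\phi_x\,dx$ from the momentum equation (Lemma~5.1). This step also produces boundary contributions (e.g.\ $(\psi\,\phi_t)|_{x=0}$) that must be handled via the exponential smallness in $\b$ and again via $D_{u2}$. Without this density-$H^1$ estimate, you have neither $\int_0^t\norm{\phi_x}_{L^2}^2\,ds<\infty$ nor enough dissipation to run the $W^{1,1}$-in-time argument that gives \eqref{asym-U}.
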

\begin{remark} 
    \begin{enumerate}
        \item Theorem \ref{thm:main} requires the strength of the viscous shock wave to be small, bounded by a constant $\delta_0$ that depends on the right end state $(\rho_+, u_+)$. Here, $\delta_0$ is independent of $\e_0$, which represents the smallness of the initial perturbation.
        \item 
       We fix a shock position initially, as $\rhotil(\beta)= (\rho_- + \rho_+)/2$, where the large constant  $\beta$ depends only on the shock strength (but independent of $\e_0$). This is necessary to control the size of the perturbation at the boundary as small as we want (see Lemma \ref{lem:bd}).
        \item  Theorem \ref{thm:main} establishes a first result on the time-asymptotic stability towards the viscous shock up to a dynamical shift $X(t)$ to the outflow problem \eqref{NS}-\eqref{BC}.
        Moreover, thanks to \eqref{asym-X}, the speed of shift does not change the shock speed eventually (at $t=\infty$),  
        the shifted wave $(v^S,u^S)(x-\sigma t-X(t)-\beta)$ time-asymptotically converges to the shock profile $(v^S,u^S)(x-\sigma t)$ up to constant shift, although we do not know the exact location of the viscous shock.
        \item
        The proof of Theorem \ref{thm:main} is based on the $a$-contraction method. This allows to control the $L^2$-perturbation to be contractive in time, up to small error by the boundary effect. We apply the $a$-contraction method to the outflow problem in Eulerian coordinate, instead of handling a free boundary value problem in Lagrangian mass coordinate. 
Notice that the usage of $a$-contraction method in Eulerian coordinate is technically more complicated than in the Lagrangian coordinate. That is why the existing results based on $a$-contraction method were handled in Lagrangian coordinate as in \cite{G-CKV,HKK23,HKKKO25,HKK25,HWWW,Kang23Inflow,HL25,KV-Inven,KV-2shock,KVW23,KVW-NSF}. 
    \end{enumerate} 
	\end{remark}


\section{Preliminaries}
\setcounter{equation}{0}
In this section, we present useful estimates for the viscous shock waves.  Moreover, we also provide the Poincar\'e type inequality with the optimal constant on any bounded interval.
\subsection{Viscous shock wave}
Here, we provide the properties of the 2-viscous shock wave $(\rhotil, \util)(\xi)=(\rhotil, \util)(x-\s t)$ with small amplitude. Without loss of generality, we assume that $\rhotil(0)=\frac{\r_- + \r_+}{2}$. The proof of Lemma below can be found in \cite{KV21}.
\begin{lem}\label{lem:VS}
For any state $(\r_+, u_+)$, there exists a constant $C>0$ such that the following is true. For any end state such that $(\r_-, u_-) \in S_2(\r_+, u_+)$, there exists a unique solution $(\rhotil, \util)(\xi) = (\rhotil, \util)(x-\s t)$ to \eqref{Vshock} such that $\rhotil(0)=\frac{\r_- + \r_+}{2}$. Let $\d$ denote the shock strength as $\d := |u_- - u_+| \sim |\r_- - \r_+|$. Then, it holds that 
\[
\util_\xi <0, \quad \rhotil_\xi < 0 ,
\]
and 
\begin{align*}
    &|\util - u_\pm| \leq C\d e^{-C\d |\xi|}, \quad \pm \xi < 0,\\
    &|(\rhotil_\xi, \util_\xi)| \leq C\d^2 e^{-C\d |\xi|}, \quad \forall \xi \in \RR,\\
    &|(\rhotil_{\xi \xi}, \util_{\xi \xi})|  \leq C\d |(\rhotil_\xi, \util_\xi)|, \quad \forall \xi \in \RR.
\end{align*}
\end{lem}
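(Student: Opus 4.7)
The plan is to reduce the second-order ODE system \eqref{Vshock} to a scalar autonomous ODE for $\tilde u$ via the two first integrals coming from the structure of the equations, then perform a phase-plane analysis near the rest points $u_\pm$, and finally extract the claimed pointwise bounds from the linearization.

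First, I would integrate the continuity equation in \eqref{Vshock} using the far-field condition at $+\infty$ to get the mass flux identity $\tilde\rho(\tilde u-\sigma)=\rho_+(u_+-\sigma)=:m$, so that $\tilde\rho$ is algebraically expressed in terms of $\tilde u$. Plugging this into the momentum equation and integrating once against the constant from $+\infty$ produces a scalar first-order ODE of the form $\tilde u_\xi = F(\tilde u)$, where $F(\tilde u) = m\tilde u + p(m/(\tilde u-\sigma)) - C_0$ and $C_0 = \rho_+u_+(u_+-\sigma)+p(\rho_+)$. The Rankine–Hugoniot relations \eqref{RH} say exactly that $F(u_-)=F(u_+)=0$, so $u_\pm$ are the two equilibria of this ODE.

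Next I would do phase-plane analysis. A direct computation gives $F'(u_+) = m - p'(\rho_+)\rho_+^2/m = (\rho_+/m)(\lambda_2(\rho_+,u_+)^2-\sigma^2_{\rm rel})$ style expression; the Lax condition \eqref{lax} then shows $F'(u_+)<0$ and $F'(u_-)>0$, so that $u_+$ is an attractor at $+\infty$ and $u_-$ at $-\infty$, and a unique heteroclinic orbit exists connecting them (uniqueness is pinned down by the normalization $\tilde\rho(0)=(\rho_-+\rho_+)/2$). Moreover, a Taylor expansion along the $2$-shock curve yields $|F'(u_\pm)| = c\,\delta + O(\delta^2)$ for some $c>0$ depending only on $(\rho_+,u_+)$; this scaling is the crucial ingredient behind the $C\delta$ exponent. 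Since $F$ has no other zero strictly between $u_+$ and $u_-$ for small $\delta$ and the equilibria attract from the correct sides, the orbit is strictly monotone, giving $\tilde u_\xi<0$, and $\tilde\rho_\xi<0$ follows from $\tilde\rho=m/(\tilde u-\sigma)$ together with the sign of $m$.

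For the exponential decay, I would set $w_\pm(\xi) := \tilde u(\xi)-u_\pm$ and rewrite $\tilde u_\xi = F(\tilde u)$ as $w_\pm' = F'(u_\pm)w_\pm + O(w_\pm^2)$. A standard Gronwall/comparison argument on the half-line $\pm\xi>0$ with the above linear rate $|F'(u_\pm)|\sim\delta$ and initial value $|w_\pm(0)|\le C\delta$ (which itself follows from the normalization and the shock strength) yields $|\tilde u - u_\pm|\le C\delta e^{-C\delta|\xi|}$ for $\pm\xi<0$. Then the first-derivative bound is immediate from $\tilde u_\xi=F(\tilde u)=F'(u_\pm)(\tilde u-u_\pm)+O(|\tilde u-u_\pm|^2)$, giving $|\tilde u_\xi|\le C\delta^2 e^{-C\delta|\xi|}$, and the identity $\tilde\rho_\xi = -m(\tilde u-\sigma)^{-2}\tilde u_\xi$ transfers the same bound to $\tilde\rho_\xi$. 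Finally, differentiating $\tilde u_\xi=F(\tilde u)$ in $\xi$ gives $\tilde u_{\xi\xi}=F'(\tilde u)\tilde u_\xi$, and since $|F'(\tilde u)|\le C\delta$ uniformly along the profile (again by Taylor expansion of $F'$ around either equilibrium and monotonicity), the second-derivative bound $|\tilde u_{\xi\xi}|\le C\delta|\tilde u_\xi|$ follows; the analogous bound for $\tilde\rho_{\xi\xi}$ is obtained by differentiating the mass identity once more.

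The main obstacle is keeping all constants $C$ uniform in $\delta$ as $\delta\to 0$; the scaling of the linear rate $|F'(u_\pm)|$ as $c\delta$ rather than as $O(1)$ is delicate and must be tracked carefully by expanding $F$ jointly along the $2$-Hugoniot curve and the distance from each equilibrium. Once this scaling is established, the remaining Gronwall and bootstrap arguments are routine.
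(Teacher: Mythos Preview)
The paper does not actually prove this lemma; it simply states that the proof can be found in \cite{KV21}. Your reduction to a scalar autonomous ODE via the mass-flux first integral, followed by phase-plane analysis and linearization at the equilibria $u_\pm$, is precisely the standard argument used in that reference (and throughout the literature on viscous shock profiles), so your approach is correct and essentially the same as what the cited proof does.

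One small inconsistency to clean up: you run the Gronwall argument on the half-lines $\pm\xi>0$ (which is correct, since $\tilde u\to u_\pm$ as $\xi\to\pm\infty$) but then quote the conclusion as holding for $\pm\xi<0$, matching the paper's displayed condition. The latter appears to be a sign typo in the statement; your derivation gives the bound on the correct side, so just make sure the half-line in the conclusion agrees with the one in the argument.
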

\begin{remark}\label{rem:pos}
    By Lemma \ref{lem:VS} and the Lax entropy condition \eqref{lax}, the velocity component of the 2-viscous shock with a small amplitude satisfies
    \beq\label{pos2}
    |\s -\util - \sqrt{p'(\rhotil)} | \le C \delta,
    \eeq  
    and so $\s-\util>0$.
    \end{remark}

\subsection{Poincar\'e type inequality}
In this subsection, we introduce the Poincar\'e type inequality, which is useful to get the $L^2$-type energy estimate. The proof of Lemma below can be found in \cite{Kang23Inflow}.
\begin{lem}\label{poincare} \cite{Kang23Inflow}
For any $a<b$ and function $f:[a,b] \to \RR$ satisfying $\int_{a}^b (y-a)(b-y)|f'(y)|^2\,dy < \infty$,
\[
\int_a^b \left|f(y) - \frac{1}{b-a} \int_a^b f(y)\,dy\right|^2\,dy \leq \frac{1}{2}\int_a^b (y-a)(b-y)|f'(y)|^2\,dy.
\]
\end{lem}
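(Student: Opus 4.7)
First I would introduce $u := f - \bar f$ with $\bar f := \tfrac{1}{b-a}\int_a^b f\,dy$, and its primitive $F(y) := \int_a^y u(z)\,dz$, which satisfies $F(a) = F(b) = 0$ since $\int_a^b u = 0$. Integrating by parts the identity $\int u^2 = \int u F'$ gives $\int_a^b u^2\,dy = -\int_a^b f' F\,dy$, and Cauchy--Schwarz with the weight $\phi(y) := (y-a)(b-y)$ yields
\begin{equation*}
\int_a^b u^2\,dy \;\le\; \Big(\int_a^b \phi\,|f'|^2\,dy\Big)^{\!1/2} \Big(\int_a^b \frac{F^2}{\phi}\,dy\Big)^{\!1/2}.
\end{equation*}
It therefore suffices to establish the weighted Hardy-type inequality
\begin{equation*}
\int_a^b \frac{F^2}{\phi}\,dy \;\le\; \frac{1}{2}\int_a^b (F')^2\,dy \;=\; \frac{1}{2}\int_a^b u^2\,dy,
\end{equation*}
after which combining with the display above and absorbing $\int u^2$ from both sides produces exactly the claimed constant $\tfrac12$.

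For the Hardy inequality, I would use the ground-state substitution $F = \phi\, w$, motivated by the fact that $\phi$ itself satisfies $\phi'' \equiv -2$ and vanishes at the endpoints. Expanding $(F')^2 = (\phi')^2 w^2 + 2\phi'\phi\,w w' + \phi^2 (w')^2$ and integrating the cross term by parts (with no boundary contribution, since $\phi(a)=\phi(b)=0$), the algebraic identity $(\phi'\phi)' = (\phi')^2 + \phi''\phi = (\phi')^2 - 2\phi$ collapses everything to
\begin{equation*}
\int_a^b (F')^2\,dy \;=\; 2\int_a^b \phi\,w^2\,dy + \int_a^b \phi^2 (w')^2\,dy \;=\; 2\int_a^b \frac{F^2}{\phi}\,dy + \int_a^b \phi^2 (w')^2\,dy.
\end{equation*}
Dropping the manifestly nonnegative remainder yields the Hardy inequality, with equality iff $w$ is constant (i.e., $F \propto \phi$), confirming the sharpness of the constant.

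\textbf{Main obstacle.} The crux is obtaining the sharp constant $\tfrac12$: any weaker constant would follow from a crude combination of the standard Poincar\'e inequality with the trivial bound $\phi \le (b-a)^2/4$, but the value $\tfrac12$ reflects that $\phi$ is precisely the ground state of the Sturm--Liouville problem $-F'' = \lambda F/\phi$ with Dirichlet data, and it is this algebraic identification that makes the $F = \phi w$ substitution produce a clean square. A minor technical point is justifying that $F/\phi$ is well-defined and that the integration by parts leaves no boundary contribution; this can be handled by approximating $f$ with smooth functions supported inside $(a,b)$, using the finiteness hypothesis $\int_a^b \phi |f'|^2 < \infty$.
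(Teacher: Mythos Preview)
Your argument is correct. The reduction via the primitive $F$ with $F(a)=F(b)=0$, Cauchy--Schwarz with weight $\phi=(y-a)(b-y)$, and the ground-state substitution $F=\phi w$ exploiting $\phi''\equiv -2$ all combine cleanly to give the sharp constant $\tfrac12$, and the boundary terms vanish as you note.

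As for comparison: the paper does not actually prove this lemma --- it simply cites \cite{Kang23Inflow} and refers the reader there. So there is no in-paper argument to set against yours; your proof stands as a complete, self-contained justification that the paper itself omits.
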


\section{A priori estimate and proof of Theorem \ref{thm:main}}
\setcounter{equation}{0}
In this section, we present a priori estimates for $H^1$ solution perturbation. Using this, we prove the global-in-time existence and large-time behavior of the outflow problem toward the viscous shock waves.
\subsection{Local existence of solutions} First, we provide the local existence of strong solutions to the outflow problem. Since the local existence of the solution can be obtained by the standard iteration argument, we omit the proof.
\begin{prop}\label{prop:loc} (Local existence) 
    For any constant $\b > 0 $, let $\rubar, \uubar$ be smooth monotone functions such that
    \[
    (\rubar(x), \uubar(x)) = (\r_+, u_+), \quad \text{for } x \geq \b, \quad \rubar(0) > 0.
    \]
    For any constants $M_0, M_1, \underline{\kappa}_0, \overline{\kappa}_0,  \underline{\kappa}_1, \overline{\kappa}_1$ with $0<M_0 < M_1$ and $0 < \underline{\kappa}_1 < \underline{\kappa}_0 < \overline{\kappa}_0 < \overline{\kappa}_1$, there exists a constant $T_0>0$ such that if 
    \begin{align*}
        &\norm{(\r_0 - \rubar, u_0 - \uubar)}_{H^1(\Rp)} \leq M_0,\\
        &0 < \underline{\kappa}_0 \leq \r_0(x) \leq \overline{\kappa}_0, \quad x \in \Rp,
    \end{align*}
    the outflow problem \eqref{NS} - \eqref{BC} has a unique solution $(\r, u)$ on $[0,T_0]$ such that 
\[
\r - \rubar \in C([0,T_0];H^1(\Rp)), \quad u-\uubar \in C([0,T_0];H^1(\Rp)) \cap L^2(0,T_0;H^2(\Rp)),
\]
and
\[
    \norm{(\r - \rubar, u - \uubar)}_{H^1(\Rp)} \leq M_1.
\]
Moreover, we have 
\[
    \underline{\kappa}_1 \leq \r(t,x) \leq \overline{\kappa}_1, \quad u(t,0) = u_-, \quad ^\forall t>0, \quad x \in \Rp.
\]
\end{prop}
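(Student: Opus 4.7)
The plan is a standard Picard iteration adapted to the outflow boundary. Since $u(t,0)=u_-<0$, the characteristics of the continuity equation exit the domain at $x=0$, so no boundary condition on $\r$ is needed (or can be imposed), whereas the momentum equation is parabolic in $u$ with the Dirichlet condition $u(t,0)=u_-$. After choosing the smooth background $(\rubar,\uubar)$ to additionally satisfy $\uubar(0)=u_-$, I would introduce perturbations $\phi=\r-\rubar$ and $\psi=u-\uubar$ with $\psi(t,0)=0$, and set up the iteration as follows: given $(\phi^n,\psi^n)$, first solve the linear transport equation for $\phi^{n+1}$ with advection speed $\uubar+\psi^n$ and no boundary condition at $x=0$ (well-posed by the method of characteristics, since each characteristic either starts from $\Rp$ at $t=0$ or exits the boundary as outgoing), then solve the linear parabolic equation for $\psi^{n+1}$ with coefficients depending on $\rubar+\phi^{n+1}$ (which stays bounded away from $0$) and zero Dirichlet datum. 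Both steps are classical on $[0,T]\times\Rp$.

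The uniform bounds split into two pieces. For the density, the representation along characteristics,
\begin{equation*}
(\rubar+\phi^{n+1})(t,X(t;x)) = (\rubar+\phi^{n+1})(0,x)\exp\Big(-\int_0^t \rd_x(\uubar+\psi^n)(s,X(s;x))\,ds\Big),
\end{equation*}
gives both the lower bound $\underline{\kappa}_1$ and upper bound $\overline{\kappa}_1$ for small $T_0$, provided $\rd_x\psi^n$ is controlled in $L^\infty_{t,x}$, which follows from the $H^1$-bound via Sobolev embedding on $\Rp$. For the $H^1$ perturbation bound, I would multiply the equation for $\phi^{n+1}$ by $\phi^{n+1}$ and by $\rd_x\phi^{n+1}$, the equation for $\psi^{n+1}$ by $\psi^{n+1}$ and by $-\rd_x^2\psi^{n+1}$, integrate in $x$, and apply Gr\"onwall on a short time interval. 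The parabolic boundary terms vanish thanks to $\psi^{n+1}|_{x=0}=0$, and the hyperbolic boundary terms produced by the continuity equation carry the factor $u_-<0$, yielding a dissipative sign at the outflow boundary so they can be discarded or absorbed.

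For convergence, the differences $(\phi^{n+1}-\phi^n,\psi^{n+1}-\psi^n)$ satisfy a linear system whose source terms are at least linear in the previous differences and whose coefficients are bounded uniformly in $n$ by the previous uniform estimates. A short-time $L^2$ energy estimate then produces a geometric contraction on $[0,T_0]$, and the limit solves the nonlinear system, inherits the density bounds $\underline{\kappa}_1\leq\r\leq\overline{\kappa}_1$ and the $H^1$ perturbation bound, and attains $u(t,0)=u_-$ in the trace sense. The main technical point — and essentially the only place where the outflow nature plays a nontrivial role at the local-in-time level — is keeping $u^n(t,0)=u_-<0$ uniformly throughout the iteration so that characteristics never enter the domain through $x=0$; here this is automatic because the boundary value is a fixed negative constant and $\uubar(0)+\psi^n(t,0)=u_-$ by construction, which is precisely what makes the Eulerian formulation tractable and avoids the free-boundary issue of the Lagrangian framework discussed in the introduction.
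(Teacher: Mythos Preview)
Your sketch is correct and is precisely the ``standard iteration argument'' the paper alludes to; the paper itself omits the proof entirely, so there is nothing further to compare. The only cosmetic point is that the proposition fixes $(\rubar,\uubar)$ without requiring $\uubar(0)=u_-$, but your additional normalization is harmless since any two admissible backgrounds differ by an $H^1$ function.
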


\subsection{Construction of weight and shift}
 In what follows, we will use the following notation, and omit the dependence on $X,\b$ for simplicity:
\begin{align*}
    a(t,x) := a^{X,\b}(t,x) = a(x  - \s t - X(t) - \b),\\
    \rhotil(t,x):= \rhotilX(t,x) = \rhotil(x-\s t - X(t) - \b),\\
    \util(t,x):= \utilX(t,x) = \util(x-\s t - X(t) - \b).
\end{align*}
In this section, we construct the weight function and the shift function.

First, we define the weight function $a(t,x) = a(\xi)$ as 
\begin{equation}\label{def:a}
    a(\xi) := 1 +\sqrt{\d} + \frac{u_+ - \util(\xi)}{\sqrt{\d}}, \quad \xi = x - \s t.
\end{equation}
Then, the weight function satisfies $1 \leq a \leq 1+ \sqrt{\d} \leq \frac{3}{2}$, and 
\begin{equation}\label{der-a}
    a'(\xi) = -\frac{\util'(\xi)}{\sqrt{\d}} >0.
\end{equation}
Now, we define a shift function $X(t)$ as a solution of ODE:
\begin{equation}\label{def:X}
    \begin{aligned}
        &\dot{X}(t) = -\frac{M}{\d}\left[\intRp a^{X,\b} \frac{p(\rhotilX)}{(\s -\utilX)}\rhotilX_x(u-\utilX)\,dx + \intRp a^{X,\b} \rhotilX (u - \utilX)\utilX_x\,dx\right],\\
        &X(0)=0,
    \end{aligned}
\end{equation}
where $M = \frac{2(\g+1)}{\r_+}$, and $\b>0$ is the positive constant which will be chosen later. 

The existence and the Lipschitzness of the shift $X(t)$ can be found in \cite{KVW23}.
\subsection{A priori estimates}
\begin{prop}\label{p-est}
    For given $(\r_+, u_+) \in \mathbb{R}_+ \times \mathbb{R}$ satisfiying $u_+<0$ and \eqref{cond:U+}, there exist positive constants $C_0, \d_0$, and $\e,$ such that the following holds.
    
    For any $u_- < 0 $ satisfying $u_->u_+$ and $\delta:= |u_- - u_+| < \d_0$, let $\r_->0$ be the (unique) constant state such that $(\r_-, u_-)$ lies on the 2-Hugoniot curve. Let $(\rhotil, \util)$ be the viscous 2-shock wave connecting $(\rho_-, u_-)$ and $(\rho_+, u_+)$ with $\rhotil(0) = (\r_- + \r_+)/2$. Then, there exists $\beta>0$ large enough (depending only on the shock strength $|u_- - u_+|$) such that the following holds. 

    Suppose that $(\r, u)$ is the solution to the outflow problem \eqref{NS}-\eqref{BC} on $[0,T]$ for some $T>0$. Moreover, assume that
    \begin{equation*}
    \begin{aligned}
    &\r - \rhotil^{X, \beta} \in C([0,T];H^1(\mathbb{R}_+)),\\
    & u - \util^{X, \beta} \in C([0,T];H^1(\mathbb{R}_+)) \cap L^2(0,T;H^2(\mathbb{R}_+)),
    \end{aligned} 
    \end{equation*}
    and
    \begin{equation}\label{p-assum}
    \| \r - \rhotil^{X, \beta} \|_{L^\infty(0,T;H^1(\mathbb{R}_+))} + \| u - \util^{X, \beta} \|_{L^\infty(0,T;H^1(\mathbb{R}_+))} \leq \e.
    \end{equation}
    Then, for all $0\leq t \leq T$, we have
    \begin{equation}\label{est:pri}
    \begin{aligned}
    &\sup_{t \in [0,T]} \left[ \| \r - \rhotil^{X, \beta} \|_{H^1(\mathbb{R}_+)} + \| u - \util^{X, \beta} \|_{H^1(\mathbb{R}_+)}\right]\\
    &\hspace{2cm} + \sqrt{\int_0^t \left(\delta |\dot{X}|^2 + G^{\text{new}} + G^S + G^{bd}+ D_\r + D_{u1} + D_{u2}\right)\,ds}\\
    &\qquad \leq C_0 \left( \| \r - \rhotil^{\beta} \|_{H^1(\mathbb{R}_+)} + \| u - \util^{\beta} \|_{H^1(\mathbb{R}_+)} \right) + C_0 e^{-C\delta \beta},
    \end{aligned} 
    \end{equation}
    where $C_0$ is independent of $T$, and
    \begin{equation}\label{terms}
    \begin{split}
        &\begin{aligned}
            &G^{\text{new}} := \intRp a_x \frac{(\s - \util)}{2}\g \rhotil^{\g - 2}\left[(\r - \rhotil) - \frac{\rhotil}{(\s - \util)}(u-\util) \right]^2\,dx,
        \end{aligned}\\
        &\begin{aligned}
            &G^S := \intRp |\util_x||u- \util|^2\,dx, &
            &G^{bd} := \left.-\frac{1}{2}u_- \left(\frac{(\rho - \rhotil)_x}{\rho}\right)^2\right|_{x=0},\\
            &D_\r := \intRp \frac{p'(\rho)}{\rho}|(\r - \rhotil)_x|^2\,dx, &
            &D_{u1} := \intRp a|(u - \util)_x|^2\,dx, \quad D_{u2} := \intRp |(u - \util)_{xx}|^2\,dx.
        \end{aligned}
    \end{split}
    \end{equation}
    In addition, by \eqref{def:X},
    \[
    |\dot{X}(t)| \leq C_0 \big( \| (\r - \rhotil^{X, \beta})(t,\cdot) \|_{L^\infty(\mathbb{R}_+)} + \| (u - \util^{X, \beta})(t,\cdot) \|_{L^\infty(\mathbb{R}_+)} \big), \quad \forall t \leq T.
    \]
    \end{prop}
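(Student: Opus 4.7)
The plan is to bootstrap the smallness assumption \eqref{p-assum} into the full estimate \eqref{est:pri} by combining the $L^2$ weighted relative entropy ($a$-contraction with shift) with higher-order energy estimates performed directly in Eulerian coordinates, the main delicate point being the treatment of boundary contributions at $x=0$.

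First, I would set up the $L^2$ step. Following the $a$-contraction framework, consider the physical relative entropy $\eta(U|\Util)$ for \eqref{NS}, weighted by $a(t,x)$ from \eqref{def:a}, and compute $\frac{d}{dt}\intRp a\,\eta(U|\Util)\,dx$ using \eqref{NS} and \eqref{Vshock}. The terms linear in $\dot X(t)$ combine to match exactly the right-hand side of \eqref{def:X}; the particular choice $M=2(\gamma+1)/\rho_+$ and the $\sqrt{\delta}$ scaling in $a$ are what allow one to complete the square and extract the favorable quadratic $G^{\text{new}}$, together with $G^S$ and $D_{u1}$, plus a harmless remainder $\delta |\dot X|^2$. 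The nontrivial boundary contributions at $x=0$ involve $(\rho-\rhotil)(t,0)$, $(u-\util)(t,0)$ and the shock-profile errors $\util(t,0)-u_-$, $\rhotil(t,0)-\rho_-$. By Lemma~\ref{lem:VS} and the normalization $\rhotil(\beta)=(\rho_-+\rho_+)/2$, the latter two are $O(e^{-C\delta\beta})$; combined with $u(t,0)=u_-$, trace inequalities bounded by the dissipation $D_\rho+D_{u1}$ absorb them up to the source $e^{-C\delta\beta}$ appearing on the right of \eqref{est:pri}.

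Next, I would perform higher-order estimates. For $(\rho-\rhotil)_x$ the natural device in this context is the effective velocity $h:=u+(\log\rho)_x$ (equivalently a BD-type entropy): rewriting the system in $(h,\rho)$ turns the density equation into a parabolic one for $(\log\rho)_x$, yielding $D_\rho$ after multiplication by $((\rho-\rhotil)_x/\rho)$ and integration by parts. The convective piece $u\bigl((\rho-\rhotil)_x/\rho\bigr)_x$ leaves the boundary residue $-\tfrac12 u_-\bigl((\rho-\rhotil)_x/\rho\bigr)^2\big|_{x=0}$, which is precisely $G^{bd}$ and is nonnegative because $u_-<0$; this is where the outflow condition is genuinely used as a dissipation mechanism. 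Similarly, multiplying the equation for $u-\util$ by $(u-\util)_{xx}$ and integrating by parts produces $D_{u2}$; the generated boundary terms at $x=0$ either vanish because $(u-\util)(t,0)=u_--\util(t,0)=O(e^{-C\delta\beta})$ or are absorbed into $D_{u2}+D_{u1}$ by trace inequalities.

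Finally, I would collect nonlinear remainders: every cross product of $(\rho-\rhotil)$, $(u-\util)$ and derivatives of $(\rhotil,\util)$ is bounded by $\delta$ times the good terms, thanks to the shock estimates in Lemma~\ref{lem:VS} and the $L^\infty$ smallness in \eqref{p-assum}; the quadratic terms without shock weight near the shock are handled by Lemma~\ref{poincare}. Summing the $L^2$ and $H^1$ estimates with appropriate small weights and choosing $\delta,\varepsilon$ small enough, all cross terms are absorbed into $G^{\text{new}}+G^S+G^{bd}+D_\rho+D_{u1}+D_{u2}$, and Gronwall yields \eqref{est:pri}; the pointwise bound on $|\dot X|$ follows directly from \eqref{def:X}, Lemma~\ref{lem:VS} and Sobolev embedding. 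The main obstacle is the boundary analysis at $x=0$: unlike the Lagrangian/impermeable-wall and inflow settings, no density boundary condition is available, several boundary contributions from the relative-entropy identity have indefinite sign, and one must simultaneously exploit (i) the exponential smallness $\util(t,0)-u_-=O(e^{-C\delta\beta})$ arising from placing the shock far inside the half-line, (ii) the sign $u_-<0$ which converts the density-hierarchy boundary term into the dissipative $G^{bd}$, and (iii) sharp trace inequalities to close the estimate against the interior dissipation $D_\rho,D_{u1},D_{u2}$.
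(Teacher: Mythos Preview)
Your overall strategy matches the paper's: weighted relative entropy ($a$-contraction with shift) for the $L^2$ step, followed by $H^1$ estimates for $(\rho-\rhotil)_x$ and $(u-\util)_x$, with boundary terms handled using the outflow sign $u_-<0$ and the exponential tail of the shock near $x=0$. One technical difference: for the $(\rho-\rhotil)_x$ estimate the paper does not use an effective-velocity/BD formulation, but instead differentiates the continuity equation, divides by $\rho$ to get an equation for $\phi_x/\rho$, and then \emph{adds} the momentum equation multiplied by $\phi_x/\rho$ in order to cancel the $\psi_{xx}$ term; the $G^{bd}$ term arises from this manipulation exactly as you describe.

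There is, however, a genuine gap in your boundary analysis. You write $\util(t,0)-u_-=O(e^{-C\delta\beta})$ and propose to absorb the resulting boundary remainders into $D_\rho+D_{u1}$ via trace inequalities. A constant-in-time bound $O(e^{-C\delta\beta})$ is not enough: after time integration over $[0,T]$ you would pick up a factor of $T$ (or, if you invoke Gronwall as you suggest, an exponential in $T$), and the estimate \eqref{est:pri} must be uniform in $T$. The paper's key observation is that the shifted shock \emph{moves away} from the boundary: from \eqref{def:X} one first shows $|\dot X(t)|\le C\varepsilon$, hence $|X(t)|\le \tfrac{\sigma}{2}t$ for $\varepsilon$ small, so that $-\sigma t - X(t)-\beta \le -\tfrac{\sigma}{2}t-\beta$ and therefore
\[
|\util(t,0)-u_-|\le C\delta\, e^{-C\delta(\sigma t/2+\beta)},
\]
which is \emph{integrable in time}. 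With this, the bad boundary pieces at the $L^2$ level (the indefinite part $(p-\ptil)(u-\util)|_{x=0}$ of the relative-entropy flux, and $(u-\util)(u-\util)_x|_{x=0}$) are bounded after time integration by $Ce^{-C\delta\beta}+C\varepsilon^2\int_0^t D_{u2}$, with no Gronwall needed; the good-signed part of $q(U;\Util)|_{x=0}$ is discarded using $u(t,0)=u_-<0$ exactly as you say. Without this time-decay mechanism your scheme of ``trace inequalities absorbed by dissipation'' cannot close: the residual $\int_0^T|(u-\util)(s,0)|^{4/3}\,ds$ (or its analogues) would grow with $T$.
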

    Here, for any function $f:\RR \to \RR$, we use the abbreviated notation: 
    \[
f^{X,\b}(\cdot) := f(\cdot - X(t) - \b), \quad f^{\b}(\cdot) := f(\cdot - \b).
    \]
\subsection{Proof of Theorem \ref{thm:main}}
Based on Propositions \ref{prop:loc} and \ref{p-est}, we use the continuation argument to prove the global-in-time existence of solution. We also apply Proposition \ref{p-est} to prove the long time behavior \eqref{asym-U}. Since those proofs are almost identical to the proof in \cite{Kang23Inflow}, we will present them in Appendix \ref{Appenidx B}.

\section{$L^2$ energy estimates}
\setcounter{equation}{0}
For the proof of Proposition \ref{p-est}, we first get the zeroth order estimates, which is based on the a-contraction method.

The goal of this section is to prove the following lemma.
\begin{lem}\label{zero}
    Under the hypothesis of Proposition \ref{p-est}, there exists a positive constant $C$ such that for all $t \in [0,T]$,
\begin{equation}\label{est:zero}
    \begin{aligned}
        &\norm{\r - \rhotilX}_{L^2(\Rp)}^2 +  \norm{u - \utilX}_{L^2(\Rp)}^2 + \int_0^t (\delta |\dot{X}(s)|^2 + \Gnew + G^S + D_{u1})\,ds\\
        & \quad \leq C\left(\norm{\r_0 - \rhotil^{\b}}_{L^2(\Rp)}^2 + \norm{u_0 - \util^{\b}}_{L^2(\Rp)}^2 \right) + Ce^{-C\delta \b} + C\e^2 \int_0^t D_{u2}\,ds,
    \end{aligned}
\end{equation}
    where $\Gnew, G^S, D_{u1}$ and $D_{u2}$ are the terms defined in \eqref{terms}.
\end{lem}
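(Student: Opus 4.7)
I would set up relative-entropy / $a$-contraction estimate for the weighted functional
\[
\mathcal{E}(t) := \int_0^\infty a(t,x)\,\eta\bigl(U(t,x)\,\big|\,\tilde U^{X,\beta}(t,x)\bigr)\,dx,
\]
where $U=(\rho,\rho u)$, $\tilde U^{X,\beta}=(\tilde\rho^{X,\beta},\tilde\rho^{X,\beta}\tilde u^{X,\beta})$, and $\eta(U)=\tfrac{1}{2}\rho u^{2}+\rho Q(\rho)$ is the standard mechanical energy with $Q'(\rho)=p(\rho)/\rho^2$. The first step is to record the perturbation equations for $\phi:=\rho-\tilde\rho^{X,\beta}$ and $\psi:=u-\tilde u^{X,\beta}$, using that $\tilde U$ satisfies \eqref{Vshock} and hence $(\tilde\rho^{X,\beta},\tilde u^{X,\beta})$ solves the NS system up to a source $\propto \dot X\,(\tilde\rho_x,\tilde u_x)$. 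Because $u(t,0)=u_-$ and $\tilde u^{X,\beta}(t,0)=u_-+O(e^{-C\delta\beta})$ by Lemma~\ref{lem:VS}, the velocity perturbation is exponentially small at $x=0$.

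Next I would differentiate $\mathcal{E}(t)$ in time and decompose the result as
\[
\tfrac{d}{dt}\mathcal{E}(t)=\dot X\,\mathcal{Y}(t)+\mathcal{B}(t)-\mathcal{G}^{\mathrm{new}}(t)-\mathcal{G}^{S}(t)-\mathcal{D}_{u1}(t)+\mathcal{R}(t),
\]
where $\mathcal{Y}$ collects all terms linear in $\dot X$, $\mathcal{B}$ is the boundary contribution at $x=0$ arising from the integrations by parts (with the $+\infty$ contributions vanishing by the far-field condition), and $\mathcal{R}$ holds the cubic and higher remainders. The quadratic ``good'' structure $-\mathcal{G}^{\mathrm{new}}$ is extracted from the $\tfrac{1}{2}(\sigma-\tilde u)\,a_x\,\gamma\tilde\rho^{\gamma-2}\bigl|\phi-\tfrac{\tilde\rho}{\sigma-\tilde u}\psi\bigr|^2$-type manipulation that is the core identity of the $a$-contraction method; here one uses $a_x=-\tilde u_x/\sqrt\delta>0$ and $\sigma-\tilde u>0$ from Remark~\ref{rem:pos}. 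The positive $\mathcal{G}^{S}\sim\int|\tilde u_x|\psi^{2}$ appears from the residual $\tilde u_x$ factor, and $-\mathcal{D}_{u1}$ comes from the weighted viscous dissipation $\int a|\psi_x|^2$.

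The next step is to control $\dot X\,\mathcal{Y}$ by the chosen shift ODE \eqref{def:X}. By design, the definition of $X$ is exactly the one that enables the Young-inequality trick of Kang--Vasseur: writing $\mathcal{Y}=\mathcal{Y}_{\mathrm{good}}+\mathcal{Y}_{\mathrm{bad}}$ with $\mathcal{Y}_{\mathrm{bad}}$ being the two integrals inside \eqref{def:X}, one obtains
\[
\dot X\,\mathcal{Y}\leq -\tfrac{\delta}{2M}|\dot X|^{2}+\tfrac{C}{\delta}|\mathcal{Y}_{\mathrm{bad}}|^{2}+|\dot X||\mathcal{Y}_{\mathrm{good}}|,
\]
where the $|\mathcal{Y}_{\mathrm{bad}}|^{2}/\delta$ piece is then absorbed into $\tfrac{1}{2}\mathcal{G}^{\mathrm{new}}+\tfrac{1}{2}\mathcal{G}^{S}$ via Cauchy--Schwarz exactly as in \cite{KV-Inven,KVW23}, while $|\dot X||\mathcal{Y}_{\mathrm{good}}|$ is absorbed by the $-\tfrac{\delta}{2M}|\dot X|^{2}$ damping. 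This is where the smallness of $\delta_0$ is used.

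The main obstacle, and the step where the Eulerian formulation pays a price, is the boundary contribution $\mathcal{B}(t)$ at $x=0$. Integration by parts of the flux and viscous terms produces, at $x=0$, expressions of the schematic form
\[
a(t,0)\bigl[F(U)-F(\tilde U)-F'(\tilde U)(U-\tilde U)\bigr]\big|_{x=0}\;+\;a(t,0)\,\psi\,\psi_x\big|_{x=0}.
\]
Using $\psi(t,0)=u_--\tilde u^{X,\beta}(t,0)$, Lemma~\ref{lem:VS} gives $|\psi(t,0)|\leq Ce^{-C\delta\beta}$ (uniformly in $t$ since $|X(t)|\leq\beta/2$ along the bootstrap), so every quadratic boundary factor containing a $\psi$-trace is pointwise $O(e^{-C\delta\beta})$. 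The density trace $\phi(t,0)$ is not small, but the coefficient in front of $\phi^{2}(t,0)$ is $(\sigma-\tilde u^{X,\beta})|_{x=0}=(\sigma-u_-)+O(e^{-C\delta\beta})<0$ by the Lax condition \eqref{lax}, which makes the $\phi^{2}(t,0)$ boundary term enter with a \emph{favorable} sign. The cross term $\phi\psi|_{x=0}$ is handled by Cauchy--Schwarz against $\psi(t,0)^{2}$ and the favorable $\phi^{2}(t,0)$ term. The viscous boundary piece $a(t,0)\psi\psi_x|_{x=0}$ is bounded by $\varepsilon^{2}D_{u2}+Ce^{-C\delta\beta}$ via $|\psi(t,0)|\leq Ce^{-C\delta\beta}$ and a trace/interpolation inequality $\|\psi_x(t,0)\|\leq C\|\psi_x\|_{L^2}^{1/2}\|\psi_{xx}\|_{L^2}^{1/2}$, which is precisely how the error $C\varepsilon^{2}\!\int_0^t D_{u2}$ and $Ce^{-C\delta\beta}$ enter the right-hand side of \eqref{est:zero}. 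The cubic remainder $\mathcal{R}$ is treated by the standard $L^\infty$-bound $\|(\phi,\psi)\|_\infty\leq C\sqrt\varepsilon$ coming from \eqref{p-assum} and absorbed into $D_{u1}$ and the good terms. Integrating in time from $0$ to $t$ then yields \eqref{est:zero}.
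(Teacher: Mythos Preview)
Your proposal has the right overall architecture (weighted relative entropy, shift ODE, boundary analysis) but contains two genuine gaps.

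\textbf{The Poincar\'e step is missing.} You assert that $\mathcal G^{S}\sim\int|\tilde u_x|\psi^{2}$ ``appears from the residual $\tilde u_x$ factor'' as a good term, and that the quadratic remainders from the shift trick are simply absorbed into $\tfrac12\mathcal G^{\mathrm{new}}+\tfrac12\mathcal G^{S}$. This is not how the argument closes. After the maximization in $\rho-\tilde\rho$ (producing $G^{\mathrm{new}}$), the relative-entropy identity still leaves leading-order bad terms $B_2=-\int a\tilde u_x\rho\psi^2$ and $B_3=-\int a\tilde u_x\tfrac{\gamma(\gamma-1)}{2}\tilde\rho^{\gamma-2}\phi^2$, which after absorbing $\phi$ into $G^{\mathrm{new}}$ contribute essentially $\delta\tfrac{\gamma+1}{2}\rho_+\int_{y_0}^{1}w^2\,dy$ in the variable $y=(u_--\tilde u)/\delta$. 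These are \emph{not} small compared to $G^{S}$; they are of the same order. The paper balances them against the damping $-\tfrac{\delta}{4M}|\dot X|^{2}=-\tfrac{M}{4\delta}|Y_g|^{2}\sim -M\rho_+^{2}\delta\bigl(\int_{y_0}^{1}w\,dy\bigr)^{2}$ and against $-\tfrac34 D_{u1}$, and the only way these three terms combine to something $\le -cG^{S}$ is via the sharp Poincar\'e inequality of Lemma~\ref{poincare} together with the Jacobian computation (Lemma~\ref{lem:jac}) giving $\tfrac{1}{y(1-y)}\tfrac{dy}{dx}=\tfrac{\gamma+1}{2}\rho_+\delta+O(\delta^2)$. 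Without this step the estimate does not close at leading order in $\delta$; smallness of $\delta_0$ alone is not enough, because the constants in $B_2,B_3$ and in the competing good terms are all $O(1)$ multiples of $\delta$.

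\textbf{The boundary sign argument is incorrect.} You claim the coefficient in front of $\phi^{2}(t,0)$ is $(\sigma-\tilde u^{X,\beta})|_{x=0}=\sigma-u_-+O(e^{-C\delta\beta})<0$ ``by the Lax condition.'' But $\sigma>0$ and $u_-<0$, so $\sigma-u_->0$; Remark~\ref{rem:pos} in fact records $\sigma-\tilde u>0$ everywhere. The actual mechanism in the paper is different: the boundary flux is $a(t,0)q(U;\tilde U)(t,0)$ with $q(U;\tilde U)=\tfrac12\rho u\psi^{2}+\tfrac{u}{\gamma-1}p(\rho|\tilde\rho)+(p-\tilde p)\psi$, and it is the \emph{outflow condition} $u(t,0)=u_-<0$ that makes both the $\psi^{2}$ and the $p(\rho|\tilde\rho)\sim\phi^{2}$ boundary contributions nonpositive. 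The remaining cross term $(p-\tilde p)\psi|_{x=0}$ is then controlled by $|\psi(t,0)|\le C\delta e^{-C\delta(\sigma t/2+\beta)}$, which after time integration gives the $Ce^{-C\delta\beta}$ error. (Note also that the correct bootstrap bound is $|X(t)|\le\tfrac{\sigma}{2}t$, not $|X(t)|\le\beta/2$; there is no uniform-in-$t$ bound on $X$, but $-\sigma t-X(t)-\beta\le -\tfrac{\sigma}{2}t-\beta$ suffices.) Your treatment of the viscous boundary piece $a\psi\psi_x|_{x=0}$ via trace/interpolation, producing $C\varepsilon^{2}\int_0^t D_{u2}$, is correct and matches the paper.
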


First, to prove Lemma \ref{zero}, observe from \eqref{p-assum} that
\begin{equation}\label{est:linf}
    \norm{\r - \rhotil}_{L^\infty(\Rp)}^2 +  \norm{u - \util}_{L^\infty(\Rp)}^2 \leq C\big(\norm{\r - \rhotil}_{H^1(\Rp)}^2 +  \norm{u - \util}_{H^1(\Rp)}^2\big) \leq C\e.
\end{equation}
Since the diffusion $D_{u1}$ is related to the small perturbation of $u-\util$, we will perform a Taylor expansion near $\utilX$ and then use Lemma \ref{poincare} to show that the l.h.s. of \eqref{id-rel} is non-positive.

\subsection{Relative entropy method}
In order to attain the $L^2$-estimate for the perturbation, we will use the relative entropy method, which was first introduced by Dafermos and DiPerna \cite{D96, D79}.

The system \eqref{NS} can be written as an viscous hyperbolic system of conservation laws:
\begin{equation}\label{abs:NS}
    U_t + f(U)_x = \rd_x \left(M(U)\rd_x \nb \eta(U)\right),
\end{equation}
where the conserved quantity $U$, flux $f$, diffusion matrix $M$, and the entropy $\eta$ of the system \eqref{NS} are defined by
\[
\begin{aligned}
    &U := \begin{pmatrix}
            \rho \\ m
    \end{pmatrix} = \begin{pmatrix}
        \rho \\ \r u
\end{pmatrix}, \quad f(U) := \begin{pmatrix}
    m \\ \frac{m^2}{\r} + p(\r)
\end{pmatrix}, \quad M(U) := \begin{pmatrix}
    0 & 0 \\ 0 & 1
\end{pmatrix}.\\
    &\eta(U) = \frac{1}{2}\frac{m^2}{\r} + \frac{\r^\g}{\g - 1}.
\end{aligned}
\]
Similarly, the shifted viscous shock wave 
\[
\Util = \UtilX(t,x) := \begin{pmatrix}
\rhotilX(t,x)\\
\widetilde{m}^{X,\b}(t,x)
\end{pmatrix}
\]
satisfies the following system:
\begin{equation}\label{abs:VS}
    \rd_t \Util + \rd_x f(\Util) = \rd_x \big(M(\Util)\rd_x \nb \eta(\Util)\big) - \dot{X} \rd_x \Util.
\end{equation}
Consider the relative entropy functional defined by
\[
    \eta(U|V) = \eta(U) - \eta(V) - \nb \eta(V)(U-V),
\]
and the relative flux given by 
\[
    f(U|V) := f(U) - f(V) - \nb f(V)(U - V).
\]
Let $q(U;V)$ be the flux of the relative entropy defined by
\[
    q(U;V) := q(U) - q(V) - \nb \eta(V)(f(U) - f(V)),
\]
where $q(U)$ is the entropy flux of $\eta$, i.e., $\nb G(U) = \nb \eta(U) \nb f(U)$.

For our system \eqref{abs:NS}, a straightforward computation yields
\begin{equation}\label{id-ftn}
\begin{aligned}
&\eta(U|\Util) = \r \frac{|u-\util|^2}{2} + \frac{p(\r | \rhotil)}{\g - 1},\\
&q(U;\Util) = \frac{1}{2}\r u|u-\util|^2 + \frac{u}{\g - 1} p(\r | \rhotil) + (p-\ptil)(u-\util), \\
&f(U|\Util) = \begin{pmatrix}
    0 \\ \r(u-\util)^2 + p(\r|\rhotil)
\end{pmatrix}.
\end{aligned}
\end{equation}
Here, $\ptil$ is defined as $\ptil:=p(\rhotil)$.

In what follows, we will compute the time evolution of the relative entropy functional, weighted by the function $a$ in \eqref{def:a}.
\begin{lem}\label{lem:rel}
    Let $a$ be the weight function defined in \eqref{def:a}, $U$ be a solution to \eqref{abs:NS}, and $\Util$ be the shifted shock wave satisfying \eqref{abs:VS}. Then,
    \begin{equation}\label{id-rel}
        \frac{d}{dt}\int_{\Rp} a(t,x) \eta(U(t,x)|\Util(t,x))\,dx = \dot{X}Y + \mathcal{J}^{\text{bad}} - \mathcal{J}^{\text{good}} + \mathcal{P},
    \end{equation}
    where
    \begin{equation*}
        \begin{aligned}
            &Y:=
            -\int_{\Rp} a_x \eta(U|\Util)\,dx + \int_{\Rp} a \frac{p'(\rhotil)}{\rhotil}\rhotil_x (\r - \rhotil)\,dx + \intRp a \r (u-\util)\util_x\,dx,\\
             &\mathcal{J}^{\text{bad}} := \intRp a_x(p-\ptil)(u-\util)dx + \frac{1}{2} \left(\intRp a_x \Big[\r (u-\util)^3 -(\s - \util)(\r - \rhotil)(u-\util)^2\Big] dx\right)\\ 
             &\qquad + \intRp a_x (u-\util)\frac{p(\rho|\rhotil)}{\g -1} \,dx - \intRp a \util_x \r (u - \util)^2 dx - \intRp a \util_x p(\r|\rhotil)\,dx\\
            &\qquad - \intRp a_x (u-\util)(u-\util)_x\,dx -\intRp a \frac{1}{\rhotil} (u-\util)(\r - \rhotil)\util_{xx},\\
            &\mathcal{J}^{\text{good}} := - \intRp a_x(\s - \util)\left[ \rhotil \frac{|u-\util|^2}{2} + \frac{p(\r|\rhotil)}{\g - 1}\right] \,dx -\intRp a |\rd_x(u-\util)|^2\,dx,\\
            &\mathcal{P} := a(t,0)q(U;\Util)(t,0) -a(t,0)(u-\util)(t,0)(u-\util)_x(t,0).
        \end{aligned}
    \end{equation*}
\end{lem}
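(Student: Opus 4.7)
The plan is to derive the identity by computing $\partial_t(a\,\eta(U|\Util))$ pointwise in $x$ and then integrating over $\Rp$, being careful with the boundary contributions at $x=0$ that the outflow geometry forces into $\mathcal{P}$. First I would split
\[
\frac{d}{dt}\int_{\Rp}a\,\eta(U|\Util)\,dx=\int_{\Rp}a_t\,\eta(U|\Util)\,dx+\int_{\Rp}a\,\partial_t\eta(U|\Util)\,dx,
\]
and use the traveling-wave form $a(t,x)=a(x-\sigma t-X(t)-\beta)$ to write $a_t=-(\sigma+\dot X)a_x$. The $-\dot X a_x\eta(U|\Util)$ piece contributes the first summand of $Y$, while the $-\sigma a_x\eta(U|\Util)$ piece will combine with the hyperbolic flux contributions to reconstruct the $(\sigma-\util)$ prefactor inside $\mathcal{J}^{\text{good}}$.

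For the pointwise evolution, I would apply the chain rule to $\eta(U|\Util)$ to obtain $\partial_t\eta(U|\Util)=[\nabla\eta(U)-\nabla\eta(\Util)]U_t-\nabla^2\eta(\Util)(U-\Util)\cdot\Util_t$, and then substitute $U_t$ from \eqref{abs:NS} and $\Util_t$ from \eqref{abs:VS}. Using the compatibility $\nabla q=\nabla\eta\nabla f$ and the symmetry of $\nabla^2\eta(\Util)\nabla f(\Util)$, the hyperbolic part collapses to the classical identity $-\partial_x q(U;\Util)-\partial_x\nabla\eta(\Util)\cdot f(U|\Util)$. The $-\dot X\Util_x$ shift term in \eqref{abs:VS} produces $\dot X\,\nabla^2\eta(\Util)(U-\Util)\cdot\Util_x$; unpacked via the explicit formula $\nabla^2\eta(\Util)(U-\Util)=(*,(\rho/\rhotil)(u-\util))^T$, this yields the remaining two summands of $Y$ once combined with the $-\dot X a_x\eta(U|\Util)$ contribution coming from $a_t$. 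The viscous part, using $M=\mathrm{diag}(0,1)$ and $\partial_m\eta=u$, reduces to $(u-\util)u_{xx}-(\rho/\rhotil)(u-\util)\util_{xx}$, which I would rewrite algebraically as
\[
(u-\util)(u-\util)_{xx}-\frac{1}{\rhotil}(u-\util)(\rho-\rhotil)\util_{xx};
\]
the second summand is already the $\util_{xx}$ term inside $\mathcal{J}^{\text{bad}}$, while the first becomes the viscous dissipation after integration by parts against $a$. The remaining hyperbolic pieces $a_x(p-\ptil)(u-\util)$, $\tfrac12 a_x\rho(u-\util)^3$, $a_x(u-\util)p(\rho|\rhotil)/(\gamma-1)$, and the $\util_x$ bulk terms arise directly from expanding $\nabla\eta(\Util)_x\cdot f(U|\Util)$ using \eqref{id-ftn}, and sort themselves into $\mathcal{J}^{\text{bad}}$ and $\mathcal{J}^{\text{good}}$ according to whether they carry $a_x$ or $\util_x$, or pair against the $(\sigma-\util)$ factor from the $-\sigma a_x\eta(U|\Util)$ contribution.

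The main technical obstacle is bookkeeping the boundary contributions at $x=0$. Integrating $-\int\partial_x q(U;\Util)\,dx$ on $\Rp$ picks up $a(t,0)q(U;\Util)(t,0)$ (the endpoint at $+\infty$ vanishes because $U-\Util\to 0$ there, from $H^1(\Rp)$ control, and $a,\util,\rhotil$ stabilize by Lemma~\ref{lem:VS}); one integration by parts on $\int a(u-\util)(u-\util)_{xx}\,dx$ then generates the second boundary contribution $-a(t,0)(u-\util)(t,0)(u-\util)_x(t,0)$ inside $\mathcal{P}$, together with the bulk terms $-\int a_x(u-\util)(u-\util)_x\,dx$ (in $\mathcal{J}^{\text{bad}}$) and $\int a|(u-\util)_x|^2\,dx$ (in $\mathcal{J}^{\text{good}}$). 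The cleanest strategy is to keep everything pointwise until the last moment and only integrate once all pieces are assembled, so as not to misassign terms between $Y$, $\mathcal{J}^{\text{bad}}$, $\mathcal{J}^{\text{good}}$, and $\mathcal{P}$; the exponential decay of $\util_x,\rhotil_x,a_x$ from Lemma~\ref{lem:VS} combined with $U-\Util\in H^1(\Rp)$ guarantees that every bulk integral converges and that no $+\infty$ boundary contribution is missed.
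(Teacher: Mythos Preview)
Your proposal is correct and follows essentially the same route as the paper: both derive the weighted relative-entropy identity by splitting off $a_t=-(\sigma+\dot X)a_x$, reducing the hyperbolic part of $a\,\partial_t\eta(U|\Util)$ to $-a\,\partial_x q(U;\Util)-a\,\partial_x\nabla\eta(\Util)\cdot f(U|\Util)$, handling the viscous piece via one integration by parts, and reading off the boundary contributions in $\mathcal P$. One small attribution slip to fix: the $a_x$ pieces such as $a_x(p-\ptil)(u-\util)$, $\tfrac12 a_x\rho u(u-\util)^2$, and $a_x\frac{u}{\gamma-1}p(\rho|\rhotil)$ come from $\int_{\Rp} a_x\, q(U;\Util)\,dx$ after integrating $-\int_{\Rp} a\,\partial_x q\,dx$ by parts, not from $\nabla\eta(\Util)_x\cdot f(U|\Util)$, which only produces the $\util_x$ bulk terms $-\int a\util_x[\rho(u-\util)^2+p(\rho|\rhotil)]\,dx$.
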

\begin{remark}
$\mathcal{J}^{\text{good}}$ consists of good terms, due to $a_x (\s - \util) >0$ by Remark \ref{rem:pos} and \eqref{der-a}.
\end{remark}
\begin{proof}
    By the relative entropy method (see, for example, \cite{KV21}), we have
\begin{align*}
    &\frac{d}{dt}\int_{\Rp} a(t,x) \eta(U(t,x)|\Util(t,x))\,dx  = \dot{X}\left[\int_{\Rp} a_x \eta(U|\Util)\,dx + \int_{\Rp} a \rd_x(\nb \eta(\Util))(U - \Util)\,dx \right]\\
    &\quad - \s \intRp a' \eta(U|\Util)\,dx - \intRp a \rd_x q(U;\Util)\,dx - \intRp a \rd_x \nb \eta(\Util)f(U|\Util)\,dx\\
    &\quad +\intRp a (\nb \eta(U) - \nb \eta(\Util))\rd_x \big(M \rd_x (\nb \eta(U) - \nb \eta(\Util))\big) + \intRp a (\nb \eta)(U|\Util) \rd_x \big(M \rd_x \eta (\Util)\big)\\
    &\quad =: \dot{X}Y - \s \intRp a' \eta(U|\Util)\,dx + \sum_{i=1}^4 I_i.
\end{align*}
Using integration by parts and \eqref{id-ftn}, we have
\begin{equation*}
    \begin{aligned}
        I_1 &=  \intRp a_x q(U;\Util)\,dx + a(t,0)q(U;\Util)(t,0),\\
        I_2 &= - \intRp a \util_x (\r (u - \util)^2 + p(\r|\rhotil))\,dx.
    \end{aligned}
\end{equation*}
For the parabolic part $I_3$ and $I_4$, we obtain that
\begin{equation*}
    \begin{aligned}
    I_3 &= -\intRp a |\rd_x(u-\util)|^2\,dx - \intRp a_x (u-\util)(u-\util)_x\,dx\\
    &\quad -a(t,0)(u-\util)(t,0)(u-\util)_x(t,0),\\
    I_4 &= -\intRp a \frac{1}{\rhotil} (u-\util)(\r - \rhotil)\util_{xx}\,dx.
\end{aligned}
\end{equation*}
Thus, we have
\begin{align*}
    &\frac{d}{dt} \intRp a \eta(U|\Util)\,dx\\
    &\quad = \dot{X}Y - \s \intRp a_x \left[\r \frac{|u-\util|^2}{2} + \frac{p(\r | \rhotil)}{\g - 1}\right]\,dx \\
    &\qquad + \frac{1}{2} \intRp a_x \r u(u-\util)^2\,dx+\intRp a_x \frac{u}{\g - 1}p(\r|\rhotil)\,dx + \intRp a_x (p-\ptil)(u-\util)\,dx\\
    &\qquad - \intRp a \util_x (\r (u - \util)^2 + p(\r|\rhotil))\,dx- \intRp a_x (u-\util)(u-\util)_x\,dx\\
    &\qquad -\intRp a \frac{1}{\rhotil} (u-\util)(\r - \rhotil)\util_{xx}\,dx-\intRp a |\rd_x(u-\util)|^2\,dx +\mathcal{P}.
\end{align*}
Equivalently, we get
\begin{align*}
    &\frac{d}{dt} \intRp a \eta(U|\Util)\,dx\\
    &\, = \dot{X}Y - \intRp a_x(\s - \util) \frac{p(\r | \rhotil)}{\g - 1}\,dx + \intRp a_x (p-\ptil)(u-\util)\,dx\\
    \
    &\, + \intRp a_x (u-\util)\frac{p(\r|\rhotil)}{\g - 1}\,dx + \frac{1}{2} \left(\intRp a_x \Big[\r (u-\util)^3 -(\s - \util)(\r - \rhotil)(u-\util)^2\Big] dx\right)\\
    &\,- \intRp a \util_x (\r (u - \util)^2 + p(\r|\rhotil))\,dx  - \intRp a_x (u-\util)(u-\util)_x\,dx\\
    &\, -\intRp a \frac{1}{\rhotil} (u-\util)(\r - \rhotil)\util_{xx}\,dx - \intRp a_x(\s - \util)\rhotil \frac{|u-\util|^2}{2} \,dx -\intRp a |\rd_x(u-\util)|^2\,dx\   + \mathcal{P}\\
    &\, = \dot{X}Y + \mathcal{J}^{\text{bad}} - \mathcal{J}^{\text{good}} + \mathcal{P}.
\end{align*}
\end{proof}
\subsection{Maximization in terms of $u-\util$}
On the right hand side of \eqref{id-rel}, we will use Lemma \ref{poincare} in order to control the bad terms only related to the perturbation $u - \util$. Therefore, we will rewrite the worst bad terms in  $\mathcal{J}^{\text{bad}}$:
\[
    \intRp a_x (p(\r) -p(\rhotil))(u-\util)\,dx
\]
in terms of $\r - \rhotil$ in the following lemma.
\begin{lem}\label{lem:max}
    Under the hypothesis of Proposition \ref{p-est}, we have
\begin{align*}
\begin{aligned}
&-\intRp a_x (\s - \util) \frac{p(\r|\rhotil)}{\g - 1} +\intRp a_x (p-\ptil)(u-\util)\,dx\\
&\quad \leq -\intRp a_x\frac{(\s - \util)}{2}\g \rhotil^{\g - 2}\left[(\r - \rhotil) - \frac{\rhotil}{(\s - \util)}(u-\util) \right]^2\,dx + \intRp a_x\frac{\g \rhotil^{\g}}{2(\s - \util)}(u-\util)^2\,dx \\
    &\qquad +C\intRp a_x |\rho - \rhotil|^2|u-\util|\,dx + C\intRp (\s - \util) a_x |\r - \rhotil|^3 \, dx.
\end{aligned}
\end{align*}
\end{lem}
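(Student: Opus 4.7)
\smallskip

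\noindent\textbf{Proof plan for Lemma \ref{lem:max}.} The strategy is a pointwise completion of the square: replace the relative pressure and the pressure difference by their leading Taylor expansions around $\rhotil$, absorb the higher-order remainders into the two cubic error terms stated in the lemma, and then rewrite the quadratic form in $(\rho-\rhotil, u-\util)$ as a negative square in a shifted variable plus a positive term in $(u-\util)^2$.

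First, since $p(\rho)=\rho^{\g}$, a second-order Taylor expansion at $\rhotil$ gives
\[
p(\rho|\rhotil) \;=\; \frac{\g(\g-1)}{2}\rhotil^{\g-2}(\rho-\rhotil)^2 \;+\; R_1(\rho,\rhotil),
\]
and a first-order expansion yields
\[
p(\rho)-p(\rhotil) \;=\; \g\,\rhotil^{\g-1}(\rho-\rhotil) \;+\; R_2(\rho,\rhotil),
\]
with $|R_1|\le C|\rho-\rhotil|^3$ and $|R_2|\le C|\rho-\rhotil|^2$ (using that $\rhotil$ stays in a compact set bounded away from $0$, by the profile bounds from Lemma \ref{lem:VS} and the hypothesis \eqref{p-assum} on the smallness of the perturbation). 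Dividing the first identity by $\g-1$ and inserting both expansions in the left-hand side of the claimed inequality, the contribution of $R_1$ is exactly of the form $(\s-\util)a_x\,O(|\rho-\rhotil|^3)$, and the contribution of $R_2$ multiplied by $(u-\util)$ gives $a_x\,O(|\rho-\rhotil|^2|u-\util|)$. These are absorbed into the last two error terms on the right-hand side.

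The remaining quadratic core is
\[
-\intRp a_x(\s-\util)\,\frac{\g\,\rhotil^{\g-2}}{2}(\rho-\rhotil)^2\,dx \;+\; \intRp a_x\,\g\,\rhotil^{\g-1}(\rho-\rhotil)(u-\util)\,dx.
\]
Setting $A:=(\s-\util)\g\rhotil^{\g-2}/2$ and $B:=\g\rhotil^{\g-1}$, the integrand is $-A\phi^2+B\phi\psi$ with $\phi:=\rho-\rhotil$, $\psi:=u-\util$. Completing the square in $\phi$ gives
\[
-A\phi^2+B\phi\psi \;=\; -A\Bigl(\phi-\tfrac{B}{2A}\psi\Bigr)^{2} \;+\; \frac{B^{2}}{4A}\psi^{2},
\]
and direct computation yields $\tfrac{B}{2A}=\tfrac{\rhotil}{\s-\util}$ and $\tfrac{B^2}{4A}=\tfrac{\g\rhotil^{\g}}{2(\s-\util)}$, which produces exactly the first two terms on the right-hand side of the claim.

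There is no genuine obstacle here: the argument is essentially algebraic once one notices that $A$ and $B$ are precisely the coefficients whose ratio $\rhotil/(\s-\util)$ appears in the shifted variable on the right-hand side. The only small care needed is that $\s-\util>0$ uniformly, which is Remark \ref{rem:pos}, so that $A>0$ and the completion of the square yields a genuine negative definite contribution, together with the uniform lower bound on $\rhotil$ used to bound $R_1,R_2$.
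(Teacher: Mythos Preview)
Your proposal is correct and follows essentially the same approach as the paper: Taylor-expand $p(\rho|\rhotil)$ and $p(\rho)-p(\rhotil)$ to leading order, absorb the remainders into the two cubic error terms, and then complete the square in $\rho-\rhotil$ using the same quadratic identity $-A\phi^2+B\phi\psi=-A(\phi-\tfrac{B}{2A}\psi)^2+\tfrac{B^2}{4A}\psi^2$. The only cosmetic difference is your explicit naming of $A,B,R_1,R_2$; the computation and the use of $\sigma-\util>0$ from Remark~\ref{rem:pos} are identical.
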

\begin{proof}
    Using \eqref{est:linf} and Taylor theorem, we obtain
\begin{equation}\label{exp-p}
\begin{aligned}
    &\left|p(\r|\rhotil) - \frac{\g (\g -1)}{2}\rhotil^{\g - 2}(\r - \rhotil)^2 \right| \leq C|\r - \rhotil|^3,\\
    &\Big|p(\rho) - p(\rhotil) - \g \rhotil^{\g - 1}(\r - \rhotil)\Big| \leq C|\r - \rhotil|^2.
\end{aligned}
\end{equation}
Using \eqref{exp-p}, we have
\begin{align*}
    & -\intRp a_x (\s - \util) \frac{p(\r|\rhotil)}{\g - 1}\,dx + \intRp a_x (p-\ptil)(u-\util)\,dx\\
    &\quad \leq - \intRp  a_x(\s - \util) \frac{\g}{2}\rhotil^{\g -2}(\r - \rhotil)^2 \,dx + C\intRp (\s - \util) a_x |\r - \rhotil|^3 \, dx\\
    &\qquad +\intRp a_x \g \rhotil^{\g - 1}(\r - \rhotil)(u-\util)\,dx +C\intRp a_x |\rho - \rhotil|^2|u-\util|\,dx\\
    &\quad = - \intRp  a_x(\s - \util) \frac{\g}{2}\rhotil^{\g -2}(\r - \rhotil)^2 \,dx + \intRp a_x \g \rhotil^{\g - 1}(\r - \rhotil)(u-\util)\,dx\\
    &\qquad +C\intRp a_x |\rho - \rhotil|^2|u-\util|\,dx + C\intRp (\s - \util) a_x |\r - \rhotil|^3 \, dx.
\end{align*}
Applying the quadratic identity $a x^2 + b x = a\left(x+ \frac{b}{2a}\right)^2 - \frac{b^2}{4a}$ with $x := \r - \rhotil$, we get
\begin{align*}
    &-\intRp a_x (\s - \util) \frac{p(\r|\rhotil)}{\g - 1} \,dx  +\intRp a_x (p-\ptil)(u-\util)\,dx\\
    &\quad \leq -\intRp a_x \frac{(\s - \util)}{2}\g \rhotil^{\g - 2}\left[(\r - \rhotil) - \frac{\rhotil}{(\s - \util)}(u-\util) \right]^2\,dx + \intRp a_x \frac{\g \rhotil^{\g}}{2(\s - \util)}(u-\util)^2\,dx \\
    &\qquad +C\intRp a_x |\rho - \rhotil|^2|u-\util|\,dx + C\intRp (\s - \util) a_x |\r - \rhotil|^3 \, dx.
\end{align*}
\end{proof}
Using Lemma \ref{lem:rel}, Lemma \ref{lem:max} and \eqref{exp-p}, we have the following lemma.
\begin{lem}\label{lem:XBG}
    Under the hypothesis of Proposition \ref{p-est}, we have
    \[
    \frac{d}{dt} \intRp a \eta(U|\Util)\,dx \leq \dot{X}Y + B - G+ \mathcal{P},
    \]
    where $Y$ and $\mathcal{P}$ were defined in Lemma \ref{lem:rel}, and
    \[
     B = \sum_{i=1}^6 B_i, \quad  G = \Gnew + G_1 + D_{u1}, 
    \]
where 
\begin{align*}
    \begin{split}
    \begin{aligned}
    &B_1 := \intRp a_x \frac{\g \rhotil^{\g}}{2(\s - \util)}(u-\util)^2\,dx, && B_2 := - \intRp a \util_x \r (u - \util)^2\,dx,\\
    &B_3 := - \intRp a \util_x \frac{\g (\g-1)}{2}\rhotil^{\g - 2}(\r - \rhotil)^2 \, dx, && B_4:= - \intRp a_x (u-\util)(u-\util)_x\,dx,\\
    &B_5 := -\intRp a \frac{1}{\rhotil} (u-\util)(\r - \rhotil)\util_{xx}, && B_6:= C \intRp a_x \Big[|\rho - \rhotil| + |u-\util| \Big]^3,\\
    &G_1 := \frac{1}{2}\intRp a_x(\s - \util)\rhotil|u-\util|^2\,dx,
    \end{aligned}
    \end{split}
    \end{align*}
and $\Gnew, D_{u1}$ are as in \eqref{terms}.    
\end{lem}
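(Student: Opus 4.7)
The plan is to start from the identity in Lemma~\ref{lem:rel} and regroup the terms on the right-hand side so that every truly dissipative contribution is packaged into $-G=-(\Gnew+G_1+D_{u1})$ while every remaining term lands in one of $B_1,\dots,B_6$. The shift term $\dot X Y$ and the boundary term $\mathcal{P}$ pass through untouched. The regrouping is essentially algebraic; the only places where real estimation is invoked are Lemma~\ref{lem:max} and the pointwise Taylor bound \eqref{exp-p}.

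The key step is to isolate the pressure pair
\[
-\intRp a_x(\s-\util)\frac{p(\r|\rhotil)}{\g-1}\,dx \;+\; \intRp a_x(p-\ptil)(u-\util)\,dx,
\]
where the first summand comes from $-\mathcal{J}^{\text{good}}$ and the second is the first term of $\mathcal{J}^{\text{bad}}$, and apply Lemma~\ref{lem:max}. Completing the square in $(\r-\rhotil)-\frac{\rhotil}{\s-\util}(u-\util)$ then produces exactly $-\Gnew$ together with the quadratic leftover $B_1$ and two cubic errors of type $|\r-\rhotil|^2|u-\util|$ and $(\s-\util)|\r-\rhotil|^3$, both of which are $\lesssim(|\r-\rhotil|+|u-\util|)^3$ and hence absorbed into $B_6$.

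The remaining pieces distribute as follows. From $-\mathcal{J}^{\text{good}}$ the kinetic piece yields $-G_1$ and the viscous piece yields $-D_{u1}$. Within $\mathcal{J}^{\text{bad}}$, the summand $-\intRp a\util_x\r(u-\util)^2\,dx$ is $B_2$, $-\intRp a_x(u-\util)(u-\util)_x\,dx$ is $B_4$, and $-\intRp a\rhotil^{-1}(u-\util)(\r-\rhotil)\util_{xx}\,dx$ is $B_5$. The summand $-\intRp a\util_x p(\r|\rhotil)\,dx$ is Taylor expanded via \eqref{exp-p}: the leading quadratic becomes $B_3$, while the $O(|\r-\rhotil|^3)$ remainder fits into $B_6$ after using $|\util_x|=\sqrt{\d}\,a_x$ from \eqref{der-a}. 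Finally, the transport cubic $\frac{1}{2}\intRp a_x[\r(u-\util)^3-(\s-\util)(\r-\rhotil)(u-\util)^2]\,dx$ and the subleading correction $\intRp a_x(u-\util)\frac{p(\r|\rhotil)}{\g-1}\,dx$ are each bounded in absolute value by $C\intRp a_x(|\r-\rhotil|+|u-\util|)^3$ via \eqref{exp-p} and the boundedness of $\r$ and $\s-\util$, and therefore fall inside $B_6$ as well. The main obstacle is not analytical but bookkeeping: one must verify that no term from Lemma~\ref{lem:rel} is double counted and that every cubic-or-higher remainder truly matches the algebraic shape defining $B_6$.
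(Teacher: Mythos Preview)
Your proposal is correct and follows precisely the route the paper intends: the paper's own proof of this lemma is a single sentence citing Lemma~\ref{lem:rel}, Lemma~\ref{lem:max}, and \eqref{exp-p}, and you have filled in exactly the bookkeeping those citations imply. Your distribution of the seven summands of $\mathcal{J}^{\text{bad}}$ into $B_2,B_3,B_4,B_5$ and the cubic pot $B_6$, the splitting of $-\mathcal{J}^{\text{good}}$ into $-G_1-D_{u1}$ plus the pressure piece fed into Lemma~\ref{lem:max}, and the absorption of all $O((|\rho-\rhotil|+|u-\util|)^3)$ remainders (including the $|\util_x|=\sqrt{\delta}\,a_x$ conversion for the Taylor remainder of $-\int a\util_x p(\rho|\rhotil)\,dx$) are all correct and complete.
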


\subsection{Estimates for $ \dot{X}Y + B - G$}
Here, we present the estimates of $\dot{X}Y + B - G$, defined in Lemma \ref{lem:XBG}.
\begin{lem}\label{lem:lead}
    Under the hypothesis of Proposition \ref{p-est}, we have
\begin{align*}
   \dot{X}Y + B - G \leq -\frac{\d}{4M}|\dot{X}|^2 - \frac{1}{2}\Gnew -\frac{\g+1}{8}\r_+G^S - \frac{1}{4}D_{u1},
\end{align*}
where $Y, B, G, \Gnew, D_{u1}$ are defined in Lemma \ref{lem:XBG}, and $G^S$ is defined in \eqref{terms}. 
\end{lem}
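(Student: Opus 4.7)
The plan is to estimate each of the six bad terms $B_i$ and the cross term $\dot{X}Y$ separately, apportioning the negative contributions on the right-hand side (the parts of $G$ and a fraction of $|\dot X|^2$) so that every $B_i$ is absorbed and a leftover $G^S$-goodness pops out. Throughout, three small parameters are exploited: the shock amplitude $\delta$, the $L^\infty$ smallness $\varepsilon$ provided by \eqref{est:linf}, and the scale separation $a_x=|\util_x|/\sqrt{\delta}$ from \eqref{der-a}, which makes $G_1$ a factor $(\sigma-\util)/\sqrt{\delta}$ larger than $G^S$.

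\textbf{Step 1 (Absorb $\dot X Y$ against $|\dot X|^2$).} Using the profile ODE \eqref{Vshock}, rewrite $\rhotil_x=\rhotil\,\util_x/(\sigma-\util)$ so that the second integral in $Y$ takes the form $\int a\,p'(\rhotil)(\sigma-\util)^{-1}\util_x(\rho-\rhotil)\,dx$, matching (up to an $O(\delta)$ discrepancy) the first bracket of the shift ODE \eqref{def:X}; the third term of $Y$ is already the second bracket in \eqref{def:X}. Since $\dot{X}=-(M/\delta)\Phi$ with $\Phi$ the bracket in \eqref{def:X}, one gets
\[
\dot X Y \;=\; -\tfrac{\delta}{M}|\dot X|^2 \;+\; \dot X R_Y,
\]
where $R_Y$ collects the $-\int a_x \eta(U|\Util)\,dx$ term plus Taylor/substitution remainders. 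Cauchy--Schwarz and Young's inequality bound $|\dot X R_Y|\le \tfrac{\delta}{4M}|\dot X|^2+C(\delta+\varepsilon)(G^{\text{new}}+G_1)$.

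\textbf{Step 2 (Core quadratic matching $B_1, B_3$ against $G^{\text{new}}$).} Expanding the square in $G^{\text{new}}$ reveals that its pure-$\psi^2$ content is exactly $B_1$. Therefore
\[
B_1 - G^{\text{new}} \;=\; -\!\!\int\! a_x \tfrac{\sigma-\util}{2}\gamma\rhotil^{\gamma-2}\phi^2\,dx \;+\;\int\! a_x\gamma\rhotil^{\gamma-1}\phi\psi\,dx,
\]
with the first term a bona fide $\phi^2$-good piece and the cross term controlled by Young against the two squares of $G^{\text{new}}$ and $G_1$. For $B_3$, comparing the weights $a|\util_x|$ (in $B_3$) against $a_x(\sigma-\util)\sim |\util_x|(\sigma-\util)/\sqrt{\delta}$ (in the $\phi^2$ portion of $G^{\text{new}}$) yields $B_3\le C\sqrt{\delta}\,G^{\text{new}}$.

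\textbf{Step 3 (The $B_2$ vs.\ $G_1$ matching producing $-G^S$).} This is the heart of the proof. Writing $a_x=|\util_x|/\sqrt{\delta}$, combine
\[
G_1 - B_2 \;=\; \int |\util_x|\,\psi^2\Big[\tfrac{(\sigma-\util)\rhotil}{2\sqrt{\delta}} - a\rho\Big]\,dx.
\]
Since $a\le 1+\sqrt{\delta}$, $\rho=\rhotil+O(\varepsilon)$, $\rhotil\to\rho_+$ and $(\sigma-\util)\to\sqrt{p'(\rho_+)}$ at $+\infty$ with $\delta$-deviations, for $\delta,\varepsilon$ small the bracket stays $\ge \rho_+ (\gamma+1)/8+$slack. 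Here the specific choice $M=2(\gamma+1)/\rho_+$ in \eqref{def:X} is what pins the explicit constant in the target. This gives $G_1 - B_2 \ge \tfrac{\gamma+1}{8}\rho_+ G^S + (\text{leftover}\ge 0)$, which is the source of the $-\tfrac{\gamma+1}{8}\rho_+ G^S$ on the right-hand side.

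\textbf{Step 4 (Diffusion couplings and cubic).} Cauchy--Schwarz on $B_4=-\!\int a_x\psi\psi_x\,dx$ gives $|B_4|\le \tfrac{1}{8}D_{u1}+C\delta^{3/2}G^S$; the bound $|\util_{xx}|\le C\delta|\util_x|$ from Lemma \ref{lem:VS} lets one split $B_5$ between $G^S$ and the $\phi^2$-part of $G^{\text{new}}$ with prefactor $C\delta$. The cubic $B_6$ is controlled by $L^\infty$ smallness: $B_6\le C\sqrt{\varepsilon}\,(G^{\text{new}}+G_1)$ from \eqref{est:linf}.

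\textbf{Step 5 (Assembly).} Summing Steps 1--4:
\[
\dot X Y + B - G \;\le\; -\tfrac{\delta}{4M}|\dot X|^2 \;-\; \big(1-C\sqrt{\delta}-C\sqrt{\varepsilon}\big)G^{\text{new}} \;-\; \tfrac{\gamma+1}{8}\rho_+ G^S \;-\; \big(1-\tfrac18-C\delta\big)D_{u1} \;+\; (\text{absorbed terms}).
\]
Choosing $\delta_0,\varepsilon$ small enough so that all the slack in the coefficients covers the absorbed positive contributions reduces the factors to $1/2$ and $1/4$, producing the stated inequality.

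The main obstacle is Step 3: the emergence of the clean $-\tfrac{\gamma+1}{8}\rho_+ G^S$ term depends on the delicate interplay between the weight $a$ (via $\sqrt{\delta}$), the profile structure $\sigma-\util\approx\sqrt{p'(\r_+)}$, the tuned shift constant $M$, and the $L^\infty$ smallness of $\phi$; unlike the Lagrangian setting, one cannot rely on an integrated good/bad balance on a fixed boundary, so each constant must be tracked explicitly.
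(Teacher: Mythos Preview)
Your proposal has a genuine gap: it omits the two ingredients that actually close the estimate in the paper, namely the change of variable $y=(u_- - \util)/\delta$ with the sharp Jacobian computation (Lemma~\ref{lem:jac}) and the Poincar\'e-type inequality (Lemma~\ref{poincare}). Without them the pointwise budgeting you describe cannot close.

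Concretely, two steps fail. In Step~2 the claim $B_3\le C\sqrt{\delta}\,G^{\text{new}}$ is false: take the direction $\phi=\frac{\rhotil}{\sigma-\util}\psi$, for which $G^{\text{new}}\equiv 0$ while $B_3\approx \tfrac{\gamma-1}{2}\rho_+ G^S>0$. You are conflating the $\phi^2$-portion of the expanded square with $G^{\text{new}}$ itself; they differ by the cross term and by $B_1$. In Step~3 the budgeting is wrong: by \eqref{pos2} one has $B_1-G_1=O(\delta^{3/2})\int w^2\,dy$, so $B_1$ and $G_1$ coincide at leading order and $G_1$ must be spent almost entirely on $B_1$. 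It is therefore unavailable against $B_2$. Once these cancellations are done correctly, the residual bad quantity is
\[
B_2+\big(\text{residual of }B_3\big)\;\approx\;\Big(\rho_+ +\tfrac{\gamma-1}{2}\rho_+\Big)\,\delta\!\int_{y_0}^1 w^2\,dy\;=\;\tfrac{\gamma+1}{2}\rho_+\,\delta\!\int_{y_0}^1 w^2\,dy,
\]
and there is no remaining pointwise good term of this size. The paper controls it nonlocally: the diffusion becomes $D_{u1}\gtrsim \tfrac{\gamma+1}{2}\rho_+\delta\int (y-y_0)(1-y)|w'|^2\,dy$ via Lemma~\ref{lem:jac}, the shift produces $-\tfrac{M}{2\delta}|Y_g|^2\approx -M\rho_+^2\delta\big(\int w\,dy\big)^2$, and Lemma~\ref{poincare} ties $\int w^2$ to these two. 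The choice $M=2(\gamma+1)/\rho_+$ enters precisely here---to make the mean term large enough in the Poincar\'e balance---and has nothing to do with the bracket in your Step~3.
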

\begin{proof}
First, we introduce a change of variable $x \mapsto y$ as
\begin{equation}\label{def:y}
    y := \frac{u_- - \util(x -\s t - X(t) - \b)}{\d}.
\end{equation}
Since $\util_x < 0$, the map $x \mapsto y=y(x,t)$ is well-defined, and it satisfies
\[
\frac{dy}{dx} = -\frac{\util_x(x-\s t - X(t) -\b)}{\delta} >0, \quad \lim_{x \to 0} y = y_0(t), \quad \lim_{x \to +\infty} y = 1,
\]
where
\[
y_0(t) := -\frac{\util(-\s t - X(t) - \b)}{\delta} >0.
\]
Note from \eqref{def:X} that 
    \[
    |\dot{X}| \leq \frac{C}{\d}\left|\intRp (U-\Util)\cdot \Util_x \,dx \right|.
\]
By a priori assumption \eqref{p-assum}, we have
\[
    |\dot{X}(t)| \leq C \|U-\Util\|_{L^\infty(\Rp)}\frac{1}{\d}\intRp |\Util_x|\,dx \leq C\e,
\]
which implies 
\[
    |X(t)| \leq C\e t.
\]
If we choose $\e$ small enough, we obtain
\[
    |X(t)| \leq \frac{\s}{2}t, \quad  ^\forall t\leq T.
\]
Here, we used the fact that the shock speed $\s$ is positive. This implies that
\begin{equation}\label{small-X}
    -\s t - X(t) - \b \leq -\frac{\s}{2}t - \b < 0, \quad ^\forall t \leq T.
\end{equation}
Thus, we have
\[
y_0(t) \leq Ce^{-C\d|\s t + X(t) + \beta|} \leq C e^{-C\d \b}.
\]
Therefore, $y_0$ can be chosen arbitrary small by taking sufficiently large $\b$. In particular, we choose large enough $\b$ so that $y_0 < \frac{1}{6}$.

For simplicity, we now rewrite the functionals $Y, B, G$ with respect to the following variables
\[
w := (u - \utilX)\circ y^{-1}.
\]
$\bullet$ (Estimates of $B_1 - G_1$) 
From \eqref{pos2}, we obtain
\begin{align*}
    B_1 - G_1 \leq  \left|\intRp a_x \frac{(\s - \util)\rhotil}{2}\left[\frac{\g \rhotil^{\g - 1}}{(\s -\util)^2} - 1 \right](u-\util)^2\,dx \right| \leq C\delta \sqrt{\d} \int_{y_0}^1 w^2\,dy.
\end{align*}
$\bullet$ (Estimates of $B_2$) Now, we estimate the leading order terms $B_2$ and $B_3$. First, for the $B_2$ term, we use $1 \leq a \leq 1+C\sqrt{\delta}$ and $|\rho - \rho_+| \leq |\rho - \rhotil| + |\rhotil - \rho_+| \leq C(\e + \d)$ to have
\begin{align*}
B_2 = -\intRp a \util_x \r (u-\util)^2\,dx 
 \leq \delta(\r_+ + C \e + C \sqrt{\d}) \int_{y_0}^1 w^2\,dy.
\end{align*}
$\bullet$ (Estimates of $B_3$)
When we control $B_3$ by $\Gnew$, we get one more leading order term, as follows: using Young inequality,
\begin{align*}
    |B_3| &= \left|\intRp a \util_x \frac{\g (\g-1)}{2}\rhotil^{\g - 2}(\r - \rhotil)^2 \, dx\right|\\
    &\leq \intRp a |\util_x| \frac{\g (\g-1)}{2}\rhotil^{\g - 2}\left|\left(\r -\rhotil - \frac{\rhotil}{(\s - \util)}(u-\util)\right) +  \frac{\rhotil}{(\s - \util)}(u-\util)\right|^2 \, dx\\
    &\leq \frac{C}{\d^{1/4}}\intRp |\util_x|\left(\r -\rhotil - \frac{\rhotil}{(\s - \util)}(u-\util) \right)^2\,dx\\
    &\qquad  + \intRp a |\util_x| \left(\frac{\g (\g-1)}{2}\frac{\rhotil^{\g}}{(\s-\util)^2}+C\d^{1/4}\right)(u-\util)^2 \,dx.
\end{align*}
From \eqref{der-a} and \eqref{pos2}, we obtain
\begin{align*}
    |B_3| &\leq C\d^{1/4} \Gnew +\left(\frac{\g -1}{2}\r_+ + C\d^{1/4}  + C\d^{1/2} \right)\intRp |\util_x|(u-\util)^2\,dx\\
    &=C\d^{1/4} \Gnew + \left(\frac{\g -1}{2}\r_+ + C\d^{1/4} + C\d^{1/2} \right)\d \int_{y_0}^1 w^2\,dy.
\end{align*}

$\bullet$ (Estimates of $B_4$)
Using Young inequality and the bound $\|a_x\|_{L^\infty}\leq C\delta \sqrt{\d}$ obtained from \eqref{der-a}, we have
\begin{align*}
    |B_4| &= \left|\intRp a_x (u-\util)(u-\util)_x \,dx\right| \leq \frac{1}{40}D_{u1} + C\intRp |a_x|^2 (u-\util)^2\,dx\\
    &\leq \frac{1}{40}D + C\d^2 \int_{y_0}^1 w^2\,dy.
\end{align*}

$\bullet$ (Estimates of $B_5$)
From $|\util_{xx}| \leq C\d |\util_{x}|$, we get
\[
    \begin{aligned}
        |B_5| &\leq \left|\intRp a \frac{1}{\rhotil} (u-\util)(\r - \rhotil)\util_{xx}\,dx\right| \\
        &\leq C\d \sqrt{\d}\left|\intRp a_x (u-\util)\left(\r -\rhotil - \frac{\rhotil}{(\s - \util)}(u-\util) \right)\,dx \right| +  C\d \sqrt{\d}\intRp a_x(u-\util)^2\,dx\\
        &\leq  \frac{1}{40}\Gnew + C\d^2 \int_{y_0}^1 w^2\,dy. 
    \end{aligned}
\]
$\bullet$ (Estimates of $B_6$)
For the cubic bad terms $B_6$, we use the inequality $(|a|+|b|)^3 \leq C(|a-b|^3 + |b|^3)$ and the interpolation inequality to have 
\[
    \begin{aligned}
        |B_6| &\leq C\intRp a_x \left|\r -\rhotil - \frac{\rhotil}{(\s - \util)}(u-\util) \right|^3\,dx + C\intRp a_x|u-\util|^3\,dx\\
        &\leq C\e \Gnew + \|u-\util\|_{L^\infty(\Rp)}^2 \intRp a_x |u-\util|\,dx\\
        &\leq  C\e \Gnew + C\|(u-\util)_x\|_{L^2(\Rp)}\|(u-\util)\|_{L^2(\Rp)} \intRp a_x |u-\util|\,dx.
    \end{aligned}
\]
Using a priori assumption \eqref{p-assum} and H\"older inequality, we get 
\[
    \begin{aligned}
        |B_6| &\leq  C\e \Gnew +C\e\|(u-\util)_x\|_{L^2(\Rp)} \frac{1}{\d^{1/4}}\sqrt{\intRp |a_x|\,dx}\sqrt{\intRp \util_x (u-\util)^2\,dx}\\
        &\leq  C\e \Gnew + C\e D_{u1} + C \e \d  \int_{y_0}^1 w^2\,dy.
    \end{aligned}
\]
\noindent $\bullet$ (Estimates of $D_{u1}$)
To estimate the diffusion, we need to approximate the Jacobian $\frac{dy}{dx}$ for the change of variable. This is provided by the following lemma.
\begin{lem}\label{lem:jac}
    There exists a constant $C>0$ such that
\begin{equation*}
    \left|\frac{1}{y(1-y)}\frac{dy}{dx} - \frac{\g + 1}{2}\r_+ \d\right| \leq C\d^2.
\end{equation*}
\end{lem}
\begin{proof}
First, observe from \eqref{def:y} that  
    \begin{align*}
        \frac{1}{y(1-y)}\frac{dy}{dx} = \left(\frac{1}{y} + \frac{1}{1-y}\right)\frac{dy}{dx} = \frac{\util'}{\util - u_-} - \frac{\util'}{\util - u_+}.
    \end{align*}
    Integrating \eqref{Vshock} over $(-\infty, \xi)$ and $(\xi ,\infty)$, we have 
    \begin{equation}\label{int_VS}
        \rhotil(\util-\s) = \r_\pm (u_\pm - \s),
    \end{equation}
    and then,
    \begin{align*}
        \util' = \util \rhotil(\util - \s) - \r_\pm u_\pm ( u_\pm - \s) + \ptil - p_\pm = \r_\pm (\util - u_\pm)(u_\pm - \s) + \ptil - p_\pm.
    \end{align*}
    From \eqref{RH} and above identities, we get 
    \begin{equation}\label{est:Jac}
        \begin{aligned}
            \frac{\util'}{\util - u_-} - \frac{\util'}{\util - u_+} &= \r_-(u_- - \s) - \r_+(u_+ - \s) + \frac{\ptil - p_-}{\util - u_-} - \frac{\ptil - p_+}{\util - u_+}\\
            &= \frac{\ptil - p_-}{\util - u_-} - \frac{\ptil - p_+}{\util - u_+}.
        \end{aligned}
    \end{equation}
    Moreover, \eqref{int_VS} implies that 
    \[
    \util - u_- = (\s - u_-)\frac{\rhotil - \r_-}{\rhotil}, \quad \text{and} \quad \util - u_+ = (\s - u_+)\frac{\rhotil - \r_+}{\rhotil}.
    \]
    Thus, we have 
    \begin{align*}
        \frac{\ptil - p_-}{\util - u_-} - \frac{\ptil - p_+}{\util - u_+} &= \rhotil \left[ \frac{\ptil - p_-}{(\s - u_-)(\rhotil - \r_-)} -  \frac{\ptil - p_+}{(\s - u_+)(\rhotil - \r_+)}\right]\\
        &=\frac{\rhotil}{(\s - u_-)(\s - u_+)}\underbrace{\left[(\s - u_+)\frac{\ptil - p_-}{\rhotil - \r_-} - (\s - u_-)\frac{\ptil - p_+}{\rhotil - \r_+} \right]}_{=:J}.
    \end{align*}
    Using \eqref{pos2}, we find that
    \begin{equation}\label{est:Jac2}
        \frac{\ptil - p_-}{\util - u_-} - \frac{\ptil - p_+}{\util - u_+} = \left(\frac{\r_+}{p'(\r_+)} + O(\d)\right)J.
    \end{equation}
    Therefore, it remains to estimate $J$. To this end, note that 
    \begin{align*}
        J &= (\s - u_-)\left[\frac{\ptil - p_-}{\rhotil - \r_-} - \frac{\ptil - p_+}{\rhotil - \r_+} \right] + (u_+ - u_-)\frac{\ptil - p_-}{\rhotil - \r_-}\\
        &=(\s - u_-)\left[\frac{\ptil - p_-}{\rhotil - \r_-} - \frac{\ptil - p_+}{\rhotil - \r_+} \right] + p'(\r_+)\d + O(\d^2).
    \end{align*}
    Moreover, observe that 
    \begin{align*}
        &\Big|\frac{\ptil - p_-}{\rhotil - \r_-} - \frac{\ptil - p_+}{\rhotil - \r_+} - \frac{p''(\r_+)}{2}(\r_- - \r_+)\Big|\\
        &\, \, \leq \Big| p'(\r_-) + \frac{p''(\r_-)}{2}(\rhotil - \r_-) - p'(\r_+) - \frac{p''(\r_+)}{2}(\rhotil - \r_+) - \frac{p''(\r_+)}{2}(\r_- - \r_+)\Big| + C\d^2\\
        &\,\,  =\Big|p'(\r_-) - p'(\r_+) + \underbrace{\frac{p''(\r_-)}{2}(\rhotil - \r_-) - \frac{p''(\r_+)}{2}(\rhotil - \r_-)}_{\leq C\d^2} - p''(\r_+)(\r_- - \r_+)\Big|+ C\d^2\\
        &\leq \,\, \big|p'(\r_-) - p'(\r_+) - p''(\r_+)(\r_- - \r_+)\big|+ C\d^2\\
        &\, \,  \leq C\d^2.
    \end{align*}
    Using \eqref{RH} and \eqref{pos2}, we have
    \[
    \r_- - \r_+ = \frac{\r_+(u_- - u_+)}{\s - u_-} = \frac{\r_+}{\s - u_-}\d.
    \]
  This implies that
    \[
        \left|\frac{\ptil - p_-}{\rhotil - \r_-} - \frac{\ptil - p_+}{\rhotil - \r_+} - \frac{p''(\r_+)}{2}\frac{\r_+}{\s - u_-}\d \right| \leq C\d^2.
    \]
    Thus, we obtain 
    \begin{equation}\label{est:J}
        \begin{aligned}
            J &= (\s - u_-)\left[\frac{\ptil - p_-}{\rhotil - \r_-} - \frac{\ptil - p_+}{\rhotil - \r_+} \right] + p'(\r_+)\d + O(\d^2)\\
            &= \left[\frac{1}{2}p''(\r_+)\r_+ +p'(\r_+) \right]\d + O(\d^2) = \frac{\g +1}{2}\g \r_+^{\g -1}\d + O(\d^2)\\
            &= \frac{\g+1}{2}p'(\r_+) \d + O(\d^2).
        \end{aligned}
    \end{equation}
    Hence, from \eqref{est:Jac}, \eqref{est:Jac2} and \eqref{est:J}, we conclude that 
    \begin{align*}
        \frac{1}{y(1-y)}\frac{dy}{dx} &= \left(\frac{\rho_+}{p'(\r_+)} + O(\d)\right)\left(\frac{\g+1}{2}p'(\r_+) \d + O(\d^2) \right)\\
        &=\frac{\g + 1}{2}\r_+ \d + O(\d^2).
    \end{align*}

\end{proof}
From Lemma \ref{lem:jac} and $1\leq a \leq 1+C\sqrt{\d}$, we obtain 
\begin{align*}
    D_{u1} &= \intRp a |\rd_y(u-\util)|^2\frac{dy}{dx}\,dy \geq  \int_{y_0}^1 \left(\frac{\g+1}{2}\r_+\d - C\d^2 \right)y(1-y)|\rd_y w|^2\,dy\\
&\geq \left(\frac{\g+1}{2}\r_+\d- C\d^2 \right)\int_{y_0}^1 (y-y_0)(1-y)|\rd_y w|^2\,dy.
\end{align*}
$\bullet$ (Estimates of $Y$)
For $Y$, we decompose the functional $Y$ as follows:
\[
    Y := Y_g + Y_1 + Y_2,
\]
where
\[
    \begin{aligned}
        &Y_g := \intRp a \frac{p'(\rhotil)}{(\s -\util)}\rhotil_x(u-\util)\,dx + \intRp a \rhotil (u - \util)\util_x\,dx,\\
        &Y_1 :=  -\intRp a_x \eta(U|\Util)\,dx = - \intRp a_x \r \frac{(u-\util)^2}{2}\,dx - \intRp a_x \frac{p(\r|\rhotil)}{\g - 1}\,dx,\\
        &Y_2 := \intRp a \frac{p'(\rhotil)}{\rhotil}\left(\r -\rhotil - \frac{\rhotil}{(\s - \util)}(u - \util)\right)\rhotil_x \,dx + \intRp a (\r - \rhotil)(u-\util)\util_x\,dx.  
    \end{aligned}
\]  
Here, $Y_g$ is the term that contributes the sharp estimate based on the inequality of Lemma \ref{poincare}.
Notice from \eqref{def:X} that 
\[
    \dot{X}(t) = -\frac{M}{\d}Y_g,
\]
and so,
\begin{align*}
    \dot{X}Y &= -\frac{\d}{M}|\dot{X}(t)|^2 + \dot{X}(t)Y_1 +  \dot{X}(t)Y_2 \\
    &\leq -\frac{\d}{2M}|\dot{X}(t)|^2 + \frac{C}{\d}\big(|Y_1|^2 + |Y_2|^2 \big)= -\frac{M}{2\d}|Y_g|^2 + \frac{C}{\d}\big(|Y_1|^2 + |Y_2|^2 \big).
\end{align*}
First of all, using \eqref{Vshock}$_2$, we have
\[
   \begin{aligned}
        Y_g &= \intRp a \frac{p'(\rhotil)}{(\s -\util)}\rhotil_x(u-\util)\,dx + \intRp a \rhotil (u - \util)\util_x\,dx\\
        &= 2\int a \rhotil \util_x (u-\util)\,dx + \intRp a \frac{\util_{xx}}{\s - \util}(u-\util)\,dx,
   \end{aligned} 
\]
which implies that
\[
\left| Y_g + 2\r_+ 
\d \intRp w\,dy \right| \leq  C\d \sqrt{\d}\intRp w\,dy.
\]
Using the algebraic inequality: $a^2/2 - b^2 \leq (b-a)^2$ for $a,b\geq0$, we get
\[
    -\frac{M}{2\d}|Y_g|^2 \leq  -\d \big(M\r_+^2 + C\d \big)\left(\int_{y_0}^1 w\,dy\right)^2.
\]
Now, we estimate the $Y_1$ term. Observe that
\begin{align*}
    \intRp a_x \r \frac{(u-\util)^2}{2}\,dx &= C \frac{1}{\sqrt{\d}} \intRp |\util_x| (u-\util)^2\,dx \\
    &\leq C \frac{1}{\sqrt{\d}}\|\util_x\|_{L^\infty}^{1/2} \sqrt{\intRp (u-\util)^2 \,dx}\sqrt{\intRp |\util_x|(u-\util)^2\,dx}\\
    &\leq C \sqrt{\d} \e \sqrt{\intRp |\util_x| (u-\util)^2\,dx}.
\end{align*}
Similar calculations yield that 
\[
    \intRp a_x \frac{p(\r|\rhotil)}{\g - 1}\,dx \leq C\intRp a_x (\r - \rhotil)^2\,dx \leq C \d^{3/4} \e  \sqrt{\intRp a_x (\r -\rhotil)^2\,dx}.
\]
we have
\begin{align*}
    \frac{C}{\d}|Y_1|^2 &\leq \frac{C}{\d}\left(\intRp a_x \r \frac{(u-\util)^2}{2}\,dx\right)^2 + \frac{C}{\d}\left(\intRp a_x \frac{p(\r|\rhotil)}{\g - 1}\,dx \right)^2\\
    &\leq C\e^2\intRp |\util_x| (u-\util)^2\,dx + C\sqrt{\d} \e^2 \intRp a_x (\r -\rhotil)^2\,dx\\
    &\leq C\e^2\intRp |\util_x| (u-\util)^2\,dx + C\sqrt{\d} \e^2 \Gnew + C\sqrt{\d} \e^2 \intRp a_x (u-\util)^2\,dx\\
    &\leq C\sqrt{\d} \e^2 \Gnew + C\d\e^2 \int_{y_0}^1 w^2\,dy.
\end{align*}
For $Y_2$ term, we use the H\"older inequality to have 
\begin{align*}
    |Y_2| &\leq C\Big(1 + \|u-\util\|_{L^\infty(\Rp)}\Big)\intRp a \left|\frac{p'(\rhotil)}{\rhotil}\left(\r -\rhotil - \frac{\rhotil}{(\s - \util)}(u - \util)\right)\util_x\right|\,dx\\
    &\qquad + C\left|\intRp |\util_x|(u-\util)^2\,dx \right|\\
    &\leq C \sqrt{\d} \left|\intRp a_x \left(\r - \rhotil - \frac{\rhotil}{(\s - \util)}(u-\util)\right)\,dx \right|+ C\left|\intRp |\util_x|(u-\util)^2\,dx\right|\\
    &\leq C\sqrt{\d} \sqrt{\intRp a_x \,dx}\sqrt{\Gnew} + C\|\util_x\|_{L^\infty}^{1/2}\sqrt{\intRp (u-\util)^2\,dx}\sqrt{\intRp |\util_x|(u-\util)^2\,dx}\\
    &\leq C\d^{3/4}\sqrt{\Gnew} + C\d\e\sqrt{\intRp |\util_x|(u-\util)^2\,dx}.
\end{align*}
Thus, we obtain
\begin{align*}
    \frac{C}{\d}|Y_2|^2 
    &\leq C\sqrt{\d}\Gnew + C\d^2 \e^2\int_{y_0}^1 w^2\,dy.
\end{align*}
$\bullet$ (Conclusion) Combining above estimates, we get 
\begin{align*}
    &-\frac{\d}{4M}|\dot{X}|^2 + \frac{C}{\d}\big(|Y_1|^2 + |Y_2|^2 \big)\ + B - \frac{1}{2}\Gnew - G_1 - \frac{3}{4}D_{u1}\\
    &\quad \leq \delta \left[\left(-\frac{1}{2}M\r_+^2 + C\d \right)\left(\int_{y_0}^1 w\,dy\right)^2 + \left(\frac{\g + 1}{2}\r_+ +  C\d^{1/4} + C\e\right) \int_{y_0}^1 w^2\,dy \right.\\
    &\quad \quad \quad \left. -\frac{3}{4} \left(\frac{\g+1}{2}\r_+- C\d \right)\int_{y_0}^1 (y-y_0)(1-y)|\rd_y w|^2\,dy\right].
\end{align*}
From the Poincar\'e-type inequality in Lemma \ref{poincare}:
\begin{equation*}
    \begin{aligned}
        \int_{y_0}^1 |w|^2\,dy - \frac{1}{1-y_0}\left(\int_{y_0}^1 w \,dy \right)^2 &\leq \frac{1}{2}\int_{y_0}^1 (y-y_0)(1-y)|\rd_y w|^2\,dy, 
    \end{aligned}
    \end{equation*}
with $M = \frac{2(\g+1)}{\r_+}$ and $y_0 < \frac{1}{6}$, we have
\begin{align*}
    &-\frac{\d}{4M}|\dot{X}|^2 + \frac{C}{\d}\big(|Y_1|^2 + |Y_2|^2 \big)\ + B - \frac{1}{2}\Gnew - G_1 - \frac{3}{4}D_{u1}\\
    &\quad \leq \d  \left[\left(-\frac{1}{2}M\r_+^2 + C\d\right)\left(\int_{y_0}^1 w\,dy\right)^2 + \frac{9}{8} \frac{\g + 1}{2}\r_+ \int_{y_0}^1 w^2\,dy \right.\\
    &\phantom{quad \leq \d  -\frac{1}{2}M\r}\left.  - \frac{3}{4}\left((\g + 1)\r_+ - C\d \right)\left(\int_{y_0}^1 |w|^2\,dy - \frac{1}{1-y_0}\left(\int_{y_0}^1 w\,dy \right)^2 \right) \right] \\
    &\quad \leq -\frac{\g+1}{8}\r_+ \d \int_{y_0}^1 w^2 \,dy + \d \left[-\frac{1}{2}M\r_+^2 + \frac{3}{4}\frac{(\g+1)\r_+}{1-y_0}+C\d \right]\left(\int_{y_0}^1 w\,dy \right)^2\\
    &\quad \leq -\frac{\g+1}{8}\r_+\d \int_{y_0}^1 w^2 =  -\frac{\g+1}{8}\r_+ \intRp |\util'||u-\util|^2\,dx = -\frac{\g+1}{8}\r_+G^S,
\end{align*}
where $G^S$ is defined in \eqref{terms}. 
Thus, we obtain
\begin{align*}
   & \dot{X}Y + B - \Gnew - G_1 - D_{u1} \\
    &\leq -\frac{\d}{4M}|\dot{X}|^2 + \frac{C}{\d}\big(|Y_1|^2 + |Y_2|^2 \big)\ + B - \frac{1}{2}\Gnew - G_1 - \frac{3}{4}D_{u1}\\
    &\quad -\frac{\d}{4M}|\dot{X}|^2 - \frac{1}{2}\Gnew - \frac{1}{4}D_{u1} \\
    &\leq -\frac{\d}{4M}|\dot{X}|^2 - \frac{1}{2}\Gnew -\frac{\g+1}{8}\r_+G^S - \frac{1}{4}D_{u1}.
\end{align*}
\end{proof}

\subsection{Estimate on the boundary terms}
Here, we provide the time integration of the boundary terms $\mathcal{P}$ can be controlled by the constant shift $\b$ and the second-order derivative of $u-\util$.
\begin{lem}\label{lem:bd}
Under the hypothesis of Proposition \ref{p-est}, there exists $C>0$ (independent of $\d$) such that
    \[
        \int_0^t \mathcal{P}\,ds \leq Ce^{-C\d \b} + C\e^2 \int_0^t \|(u-\util)_{xx}\|_{L^2(\Rp)}^2\,ds
    \]
    for all $t \in [0,T]$.
\end{lem}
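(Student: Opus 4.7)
I would bound $\int_0^t \mathcal{P}\,ds$ by exploiting two structural features: the sign condition $u_- < 0$, and the pointwise smallness of the shock profile near $x=0$ when the offset $\b$ is taken large. At $x=0$, using $u(t,0)=u_-$, the entropy flux reads
\[
q(U;\Util)(t,0) = \frac{u_-}{2}\r|u_- - \util|^2 + \frac{u_-}{\g - 1}p(\r|\rhotil) + (p(\r) - p(\rhotil))(u_- - \util),
\]
and its first two summands are nonpositive, since $u_- < 0$ while $\r|u_- - \util|^2$ and $p(\r|\rhotil)$ are nonnegative; they contribute favorably and can be discarded in the upper bound.

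Next, I would quantify the boundary smallness. From $|X(t)| \leq \s t/2$ in \eqref{small-X}, the shock argument satisfies $|-\s t - X(t) - \b| \geq \s t/2 + \b$, so Lemma \ref{lem:VS} yields
\[
|u_- - \util(t,0)| \leq C\d\,e^{-C\d(\s t/2 + \b)},
\]
and in particular $\int_0^t |u_- - \util(s,0)|^2\,ds \leq Ce^{-C\d\b}$. Combined with $\|\r - \rhotil\|_{L^\infty} \leq C\e$ from the a priori $H^1$ bound, the cubic term in $q$ is bounded by $|(p(\r) - p(\rhotil))(u_- - \util)| \leq C\e\,\d e^{-C\d\b}e^{-C\d\s t/2}$, which integrates in time to $Ce^{-C\d\b}$.

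The delicate term is the parabolic boundary contribution $-a(t,0)(u-\util)(u-\util)_x\big|_{x=0}$, since $(u-\util)_x(t,0)$ is not directly small. Here I would combine the trace-type identity
\[
|(u-\util)_x(t,0)|^2 = -2\int_{\Rp}(u-\util)_x(u-\util)_{xx}\,dx \leq 2\|(u-\util)_x\|_{L^2}\|(u-\util)_{xx}\|_{L^2},
\]
with the a priori bound $\|(u-\util)_x\|_{L^2}\leq \e$, to obtain the pointwise estimate $|(u-\util)(u-\util)_x|_{x=0} \leq C\e^{1/2}\,|u_- - \util(t,0)|\,\|(u-\util)_{xx}\|_{L^2}^{1/2}$. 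Integrating in time and applying H\"older with exponents $(4/3, 4)$ gives the upper bound $C\e^{1/2}\bigl(\int_0^t|u_- - \util(s,0)|^{4/3}\,ds\bigr)^{3/4}\bigl(\int_0^t\|(u-\util)_{xx}\|_{L^2}^2\,ds\bigr)^{1/4}$; using $\int_0^t |u_- - \util(s,0)|^{4/3}\,ds \leq C\d^{1/3}e^{-\frac{4}{3}C\d\b}$, followed by Young's inequality with a scaling parameter calibrated so that the $\|(u-\util)_{xx}\|_{L^2}^2$ factor picks up coefficient $\e^2$, produces the claimed splitting $Ce^{-C\d\b} + C\e^2\int_0^t\|(u-\util)_{xx}\|_{L^2}^2\,ds$.

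The main obstacle is precisely this final calibration: the factor $\e^{1/2}$ from the a priori estimate and the factor $\d^{1/4}e^{-C\d\b}$ from the boundary decay must combine, through Young with $p = 4/3,\ q = 4$, to produce exactly the coefficient $\e^2$ in front of $D_{u2}$ without creating any compensating $\e^{-1}$ blow-up attached to the exponential $e^{-C\d\b}$. The other terms in $\mathcal{P}$ are handled by the sign argument and the straightforward exponential integration above.
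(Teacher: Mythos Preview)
Your proposal is correct and follows essentially the same route as the paper: the sign of $u_-$ kills the first two pieces of $q(U;\Util)(t,0)$, the remaining $(p-\ptil)(u-\util)$ term is handled via $\|\rho-\rhotil\|_{L^\infty}\le C\e$ and the exponential decay of $|u_--\util(t,0)|$, and the parabolic boundary term uses $|(u-\util)_x(t,0)|\le \|(u-\util)_x\|_{L^2}^{1/2}\|(u-\util)_{xx}\|_{L^2}^{1/2}$ together with the $4/3$--$4$ split. Your calibration worry is unfounded: the paper simply applies Young pointwise in $s$ to $|(u-\util)(s,0)|\cdot\|(u-\util)_x\|_{L^2}^{1/2}\|(u-\util)_{xx}\|_{L^2}^{1/2}$ with exponents $(4/3,4)$, giving $C|(u-\util)(s,0)|^{4/3}+C\|(u-\util)_x\|_{L^2}^2\|(u-\util)_{xx}\|_{L^2}^2$, and then bounds $\|(u-\util)_x\|_{L^2}^2\le\e^2$ --- no rescaling parameter is needed and no $\e^{-1}$ appears.
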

\begin{proof}
First, recall from \eqref{small-X} that
\[
    -\s t - X(t) - \b \leq -\frac{\s}{2}t - \b < 0, \quad ^\forall t \leq T.
\]
$\bullet$ Estimate of  $\displaystyle{\int_0^t a(s,0)q(U;\Util)(s,0)\,ds}$: Recall from \eqref{id-ftn} that
\begin{equation}\label{bd main}
    \begin{aligned}
        &\int_0^t a(s,0)q(U;\Util)(s,0)\,ds \\
        &\, = \int_0^t a(s,0)\left[\frac{1}{2}\r u (u-\util)^2 + \frac{u}{\g - 1}p(\r|\rhotil) \right](s,0)\,ds + \int_0^t a(s,0)(p-\ptil)(s,0)(u-\util)(s,0)\,ds.
    \end{aligned}
\end{equation} 
Since $u(t,0) = u_- <0$, the first term on the right-hand side in \eqref{bd main} forms a good term. Thus, from \eqref{est:linf} and \eqref{small-X}, we have
\begin{align*}
    \int_0^t a(s,0)q(U;\Util)(s,0)\,ds &\leq \int_0^t a(s,0)(p-\ptil)(s,0)(u-\util)(s,0)\,ds\\
    &\leq C \| p -\ptil\|_{L^\infty(\Rp)} \int_0^t |u_- - \util|(s,0)\,ds\\
    &\leq C\e \int_0^t \d e^{-C\d |-\s s - X(s) - \beta|}\,ds \leq C\e e^{-C\d \beta}.
\end{align*}
$\bullet$ Estimate of $\displaystyle{\int_0^t a(s,0)(u-\util)(s,0)(u-\util)_x(s,0)\,ds}$: Using Interpolation inequality and \eqref{small-X}, we have
\begin{align*}
    &\left| \int_0^t a(s,0)(u-\util)(s,0)(u-\util)_x(s,0) \,ds \right|\\
    &\quad \leq \int_0^t |(u-\util)(s,0)|\norm{(u-\util)_x}_{L^2(\Rp)}^{1/2}\norm{(u-\util)_{xx}}_{L^2(\Rp)}^{1/2}\,ds\\
    &\quad\leq C\int_0^t  |(u-\util)(s,0)|^{4/3}\,ds + C\int_0^t \norm{(u-\util)_x}_{L^2(\Rp)}^{2}\norm{(u-\util)_{xx}}_{L^2(\Rp)}^{2}\,ds\\
    &\quad\leq C\int_0^t \d^{4/3}e^{-C\d|-\s s - X(s) - \beta|}\,ds + C\e^2 \int_0^t \norm{(u-\util)_{xx}}_{L^2(\Rp)}^{2}\,ds\\
    &\quad \leq C\d^{1/3}e^{-C\d \b} + C\e^2 \int_0^t \norm{(u-\util)_{xx}}_{L^2(\Rp)}^{2}\,ds.
\end{align*} 
Thus, we obtain
\[
    \int_0^t \mathcal{P}\,ds \leq Ce^{-C\d \b} + C\e^2 \int_0^t \|(u-\util)_{xx}\|_{L^2(\Rp)}^2\,ds.
\]
\end{proof}

\subsection{Proof of Lemma \ref{zero}}
From Lemma \ref{lem:XBG} and Lemma \ref{lem:lead}, we have 
\begin{align*}
    \frac{d}{dt} \intRp a \eta(U|\Util)\,dx &\leq \dot{X}Y + B - G + \mathcal{P}\\
    &\leq -\frac{\d}{4M}|\dot{X}|^2 - \frac{1}{2}\Gnew -\frac{\g+1}{8}\r_+G^S - \frac{1}{4}D_{u1} + \mathcal{P}.
\end{align*}
Integrating the above inequality over $[0,t]$ for any $t \in [0,T]$, and using Lemma \ref{lem:bd}, we obtain
\begin{align*}
    &\intRp \eta(U(t,x)|\Util(t,x))\,dx + \int_0^t \big(\d |\dot{X}|^2 + \Gnew + G^S + D_{u1} \big)\,ds\\
    &\quad \leq C \intRp  \eta(U(0,x)|\Util(0,x))\,dx  + Ce^{-C\d \b} +C\e^2 \int_0^t \|(u-\util)_{xx}\|_{L^2(\Rp)}^2\,ds.
\end{align*}
Since the relative entropy is equivalent to the $L^2$-distance under the small perturbation assumption, that is,
\[
     \|u - \util \|_{L^2(\Rp)}^2 + \|\rho - \rhotil \|_{L^2(\Rp)}^2\sim \intRp \eta \big(U(t,x)|\Util(t,x) \big)\,dx,
\]
we conclude that 
\begin{align*}
    &\norm{(\r - \rhotil, u-\util)(t,\cdot)}_{L^2(\Rp)}^2 + \int_0^t \big(\d |\dot{X}|^2 + \Gnew + G^S + D_{u1} \big)\,ds\\
    &\quad \leq C \norm{(\r - \rhotil, u-\util)(0,\cdot)}_{L^2(\Rp)}^2 + Ce^{-C\d \b} +C\e^2 \int_0^t \|(u-\util)_{xx}\|_{L^2(\Rp)}^2\,ds.
\end{align*}

\section{Higher order estimates}
We prove the higher order estimates for $U - \UtilX$. For simplicity, we define $\phi$ and $\psi$ by
\[
\phi(t,x) := \rho(t,x) - \rhotilX(t,x), \quad \psi(t,x)  := u(t,x) - \utilX(t,x).
\]
Then it is easy to check that $(\phi, \psi)$ satisfies
\begin{equation}
\begin{aligned}\label{NS-pert}
&\phi_t + u\phi_x + \r \psi_x = F + \dot{X}\rhotil_x,\\
&\r(\psi_t + u\psi_x) + p'(\r)\phi_x - \psi_{xx} = G + \dot{X}\big(\r \util_x\big),
\end{aligned}
\end{equation}
where
\begin{equation}\label{def:FG}
\begin{aligned}
    &F := -\psi \rhotil_x - \phi \util_x,\\
    &G := \rhotil_x\left[\phi \frac{p'(\rhotil)}{\rhotil} - (p'(\r) - p'(\rhotil))\right] - \r \psi \util_x - \frac{\phi}{\rhotil}\util_{xx}.
\end{aligned}
\end{equation}
First, we will prove the $H^1$-estimate for the perturbation related to the density: $\rho - \rhotilX$.
\begin{lem}\label{lem:phix}
Under the hypothesis of Proposition \ref{p-est}, we have
\begin{align}\label{est:rhox}
\begin{aligned}
     &\|(\r - \rhotilX)\|_{H^1(\Rp)}^2 +  \|(u - \utilX)\|_{L^2(\Rp)}^2 \\
     &\quad +\int_0^t (\d |\dot{X}|^2 + \Gnew + G^S + G^{\text{bd}} + D_\r + D_{u1})\,ds\\
    &\leq C\left(\|(\r - \rhotil^{\b})(0,\cdot )\|_{H^1(\Rp)}^2 +  \|(u - \util^{\b})(0,\cdot)\|_{L^2(\Rp)}^2\right) +Ce^{-C\d \b} + C\e \int_0^t D_{u2}\,ds,
\end{aligned}
\end{align}
where $\Gnew, G^S, G^{bd}, D_\r, D_{u1}$ and $D_{u2}$ are defined in \eqref{terms}.
\end{lem}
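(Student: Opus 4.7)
The zeroth-order estimate \eqref{est:zero} from Lemma \ref{zero} already controls $\|\phi\|_{L^2}^2 + \|\psi\|_{L^2}^2$ and the dissipations $G^{\text{new}}, G^S, D_{u1}$, up to the small error $C\e^2\int_0^t D_{u2}\,ds$ and the exponentially small boundary contribution $Ce^{-C\delta\beta}$. To upgrade to \eqref{est:rhox}, it remains to add $\|\phi_x\|_{L^2}^2$ together with the diffusion $D_\rho$ and the boundary good term $G^{\text{bd}}$. Since $G^{\text{bd}} = -\tfrac{u_-}{2}(\phi_x/\rho)^2|_{x=0}$ is positive thanks to $u_-<0$, the natural weighted quantity to work with is $\phi_x/\rho$ rather than $\phi_x$ alone.

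Differentiating $\eqref{NS-pert}_1$ in $x$, dividing by $\rho$, and using $\rho_t = -u\rho_x - \rho u_x$ to cancel the $u_x\phi_x/\rho$ and $u\rho_x\phi_x/\rho^2$ contributions produces the clean transport identity
\begin{equation*}
\Big(\frac{\phi_x}{\rho}\Big)_t + u\Big(\frac{\phi_x}{\rho}\Big)_x + \psi_{xx} = \frac{F_x + \dot X\,\rhotil_{xx}}{\rho} - \frac{\rho_x\psi_x}{\rho}.
\end{equation*}
Multiplying by $\phi_x/\rho$ and integrating over $\Rp$, the transport term yields $G^{\text{bd}}$ as a boundary contribution at $x=0$ (plus an interior term $\tfrac12\int u_x(\phi_x/\rho)^2\,dx$ of order $\delta$). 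The $\psi_{xx}$ piece is then treated by substituting $\eqref{NS-pert}_2$, which generates precisely $D_\rho$ together with a cross term $\int\psi_t\phi_x\,dx$, a convective term $\int u\psi_x\phi_x\,dx$, and lower-order contributions from $G$, $F_x$, $\rhotil_{xx}$, and $\dot X\util_x$.

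The cross term $\int\psi_t\phi_x\,dx$ is handled by integrating by parts in time: $\int_0^t\!\!\int\psi_t\phi_x\,dx\,ds = [\int\psi\phi_x\,dx]_0^t - \int_0^t\!\!\int\psi\phi_{xt}\,dx\,ds$. One substitutes the differentiated continuity equation for $\phi_{xt}$, and repeated spatial integrations by parts — which mirror the cancellations used above to derive the transport equation for $\phi_x/\rho$ — reduce the remainder to a term of the form $\int\rho|\psi_x|^2\,dx$ controllable by the already-available $D_{u1}$. The time-boundary piece $[\int\psi\phi_x\,dx]_0^t$ is absorbed by Cauchy-Schwarz, giving $\tfrac14\|\phi_x\|_{L^2}^2$ on the left and a $C\|\psi\|_{L^2}^2$ already handled by \eqref{est:zero}. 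All remaining interior error terms are either cubic in $(\phi,\psi,\phi_x,\psi_x)$ or carry a factor $\delta$ or $\dot X$; they are absorbed into $G^{\text{new}}, G^S, D_\rho, D_{u1}, \delta|\dot X|^2$ by Young's inequality, the a-priori smallness \eqref{p-assum}, Sobolev embedding $H^1(\Rp)\hookrightarrow L^\infty(\Rp)$, and the exponential decay estimates on $\rhotil_x, \util_x$ from Lemma \ref{lem:VS}. The few residual $\psi_{xx}$-linear terms surviving the integrations by parts are bounded via Young by $C\e^2\int_0^t D_{u2}\,ds$. Every boundary contribution at $x=0$ other than $G^{\text{bd}}$ itself carries at least one factor $\psi(t,0) = u_- - \util(-\s t - X(t)-\beta)$ or $(p-\ptil)(t,0)$, both of order $\delta e^{-C\delta(\s t/2 + \beta)}$ by Lemma \ref{lem:VS} and \eqref{small-X}, so after time integration they fit into $Ce^{-C\delta\beta}$, exactly as in Lemma \ref{lem:bd}. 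Adding the resulting identity to \eqref{est:zero} with a small multiplicative constant chosen so that $D_\rho$ and $G^{\text{bd}}$ survive on the left, and taking $\e$, $\delta$ small enough, yields \eqref{est:rhox}.

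\textbf{Main difficulty.} The sharp point is the closure of the cross term $\int\psi_t\phi_x\,dx$: after integration by parts in time, the resulting $\int\rho|\psi_x|^2\,dx$ can only be disposed of using the $D_{u1}$ whose coefficient in \eqref{est:zero} is fixed (at $1/4$ in the proof of Lemma \ref{zero}). One must therefore weight the $\phi_x/\rho$-identity by a small enough constant. Simultaneously, the integrations by parts needed to rewrite $\int\psi\phi_{xt}\,dx$ produce several boundary contributions at $x=0$ containing factors like $\psi(t,0)$, $\phi_x(t,0)$, $\psi_x(t,0)$; each of these must be controllable by $Ce^{-C\delta\beta}$ after time integration, which is precisely what forces the hypothesis that $\beta$ is large (depending only on $\delta$).
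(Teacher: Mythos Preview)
Your plan matches the paper's proof almost exactly: derive a transport equation for $\phi_x/\rho$, multiply by $\phi_x/\rho$, eliminate the $\psi_{xx}$ term via $\eqref{NS-pert}_2$ (which produces $D_\rho$ and the cross term $\int\psi_t\phi_x\,dx$), then dispose of the cross term by pulling out a time derivative. The only organizational difference is that the paper keeps $\int\psi\phi_x\,dx$ inside the energy functional, writing $\int\psi_t\phi_x = \tfrac{d}{dt}\int\psi\phi_x + (\psi\phi_t)|_{x=0} + \int\psi_x\phi_t$ and then invoking $\eqref{NS-pert}_1$ for $\phi_t$, whereas you time-integrate first and substitute the differentiated continuity equation for $\phi_{xt}$; after your integrations by parts in $x$ the two routes coincide.

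Two small corrections. First, your assertion that $(p-\ptil)(t,0)$ is of order $\delta e^{-C\delta(\sigma t/2+\beta)}$ is false: since $\rho(t,0)$ is \emph{not} prescribed in the outflow problem, $(p-\ptil)(t,0)$ is only $O(\e)$ via the a~priori bound \eqref{est:linf}. This does not break the argument --- every boundary term actually produced in the $\phi_x$-estimate carries a factor $\psi(t,0)$, which \emph{is} exponentially small --- but the claim as written is incorrect and misses the defining feature of the outflow setting. Second, the cubic interior term $\tfrac12\int u_x(\phi_x/\rho)^2\,dx$ contains $\int|\psi_x|\phi_x^2\,dx$, and controlling this requires $\|\psi_x\|_{L^\infty}\le C\|\psi_x\|_{H^1}$; the resulting bound is $C\e(D_{u1}+D_{u2}+D_\rho)$, not $C\e^2 D_{u2}$. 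This is harmless since the lemma only asserts $C\e\int_0^t D_{u2}\,ds$ on the right-hand side.
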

\begin{proof}
Differentiating \eqref{NS-pert}$_1$, we have
\begin{equation}\label{eq:phitx}
    \begin{aligned}
        \phi_{tx} + u \phi_{xx} + \r \psi_{xx} &= F_x - \util_x \phi_x - \rhotil_x\psi_x - 2\phi_x \psi_x + \dot{X}\rhotil_{xx}\\
        &=: F_1 + \dot{X}\rhotil_{xx}.
    \end{aligned}
\end{equation}
Observe that $F_1$ satisfies the following estimates:
\begin{equation}\label{est:F1}
    \begin{aligned}
            |F_1| &= |(-\psi \rhotil_x - \phi \util_x)_x -\util_x \phi_x - \rhotil_x \psi_x - 2\phi_x \psi_x|\\
            &\leq C\big(\d|\util_x||(\psi, \phi)| + |(\phi_x, \psi_x)||\util_x| + |\phi_x \psi_x|\big).
    \end{aligned}
\end{equation}
Next, we divide \eqref{eq:phitx} by $\r$ to get
\begin{equation}\label{eq:phix}
\begin{aligned}
    \left(\frac{\phi_x}{\r}\right)_t + u \left(\frac{\phi_x}{\r}\right)_x + \psi_{xx} &= \frac{F_1}{\r} - \frac{\phi_x}{\r^2}(\r_t + \r_x u)+ \dot{X}\left(\frac{\rhotil_{xx}}{\r}\right)\\
    &=\frac{F_1}{\r} + \frac{\phi_x}{\r}u_x  + \dot{X}\left(\frac{\rhotil_{xx}}{\r}\right) =: F_2  + \dot{X}\left(\frac{\rhotil_{xx}}{\r}\right).
\end{aligned}
\end{equation}
Then, multiplying \eqref{eq:phix} by $\frac{\phi_x}{\r}$, and integrating it with respect to $x$, we obtain
\begin{equation}\label{est:phix}
    \begin{aligned}
        &\frac{1}{2}\frac{d}{dt}\norm{\frac{\phi_x}{\r}}_{L^2(\Rp)}^2 - \frac{1}{2}\intRp u_x \left(\frac{\phi_x}{\r}\right)^2\,dx - \left. \frac{1}{2}u\left(\frac{\phi_x}{\r}\right)^2\right|_{x=0} + \intRp \frac{\psi_{xx}\phi_x}{\r}\,dx\\
        &\quad = \intRp \frac{F_2 \phi_x}{\r}\,dx + \dot{X} \intRp \frac{\phi_x}{\r} \left(\frac{\rhotil_{xx}}{\r}\right)\,dx.
    \end{aligned}
\end{equation}
Now, we try to eliminate the term of $\psi_{xx}$ in \eqref{est:phix}. To this end, we multiply \eqref{NS-pert}$_2$ by $\frac{\phi_x}{\r}$, and integrating it with respect to $x$ to get
\[
    \intRp (\psi_t + u\psi_x)\phi_x\,dx + \intRp (p'(\r)\phi_x - \psi_{xx})\frac{\phi_{x}}{\r}\,dx = \intRp G \frac{\phi_x}{\r}\,dx + \dot{X} \intRp \phi_x \util_x\,dx.
\]
Using the identity:
\begin{align*}
    \intRp \psi_t \phi_x\,dx = \frac{d}{dt}\intRp \psi \phi_x\,dx - \intRp \phi_{xt} \psi \,dx =  \frac{d}{dt}\int \psi \phi_x\,dx + \left. (\psi 
    \phi_t)\right|_{x=0} + \intRp \psi_x \phi_t\,dx,
\end{align*}
we have
\begin{equation}\label{est:aux}
    \begin{aligned}
        &\frac{d}{dt}\intRp \psi \phi_x\,dx + \left. (\psi 
    \phi_t)\right|_{x=0} + \intRp \psi_x \phi_t\,dx + \intRp u\psi_x \phi_x\,dx \\
    &\quad \quad + \intRp \frac{p'(\r)}{\r}\phi_x^2\,dx - \intRp \psi_{xx}\frac{\phi_x}{\r}\,dx =  \intRp G \frac{\phi_x}{\r}\,dx + \dot{X} \intRp \phi_x \util_x\,dx.
    \end{aligned}
\end{equation}
Adding \eqref{est:aux} to \eqref{est:phix}, we obtain
\begin{equation*}
    \begin{aligned}
        &\frac{d}{dt}\left(\frac{1}{2}\norm{\frac{\phi_x}{\r}}_{L^2(\Rp)}^2  + \intRp \psi \phi_x \,dx\right)  \underbrace{\left.-\frac{1}{2}u_-\left(\frac{\phi_x}{\r}\right)^2\right|_{x=0} \,}_{=: G^\text{bd}} + \underbrace{\intRp\frac{p'(\r)}{\r}|\phi_x|^2\,dx}_{=: D_\r}\\
        &\quad = \frac{1}{2} \intRp u_x \left(\frac{\phi_x}{\r}\right)^2\,dx - \intRp \psi_x (\phi_t + u\phi_x) \,dx - \left. (\psi \phi_t) \right|_{x=0}\\
        &\qquad + \intRp \frac{\phi_x}{\r}(G + F_2)\,dx+  \dot{X} \intRp\left(\frac{\rhotil_{xx}}{\r^2}+\util_x \right) \phi_x\,dx\\
        &\quad =: I_1 + I_2 + I_3 + I_4 + I_5.
    \end{aligned}
    \end{equation*}
    Notice that $G^\text{bd}>0$ and $D_\r >0$ by  $u_-<0$ and $p'(\rho)>0$.\\   
$\bullet$ (Estimates of $I_1$) We use the Sobolev embedding and a priori assumption \eqref{p-assum} to have
\begin{equation}\label{I1 estimate}
\begin{aligned}
    |I_1| &\leq \frac{1}{2} \intRp |\psi_x| \left(\frac{\phi_x}{\r} \right)^2\,dx + \frac{1}{2}\intRp |\util_x| \left(\frac{\phi_x}{\r} \right)^2\,dx\\
    &\leq  \|\psi_x\|_{L^\infty(\Rp)}\|\phi_x\|_{L^2(\Rp)}^2+ C\delta^2 D_\r
    \leq C\e\|\psi_x\|_{H^1(\Rp)}\|\phi_x\|_{L^2(\Rp)} + C\d^2 D_\r\\
    &\leq C\e(D_{u1} + D_{u2}) + C(\d^2 + \e) D_\r.
\end{aligned}
\end{equation}
$\bullet$ (Estimates of $I_2$)
From \eqref{NS-pert}, $I_2$ term is equivalent to
\[
I_2 = \intRp \psi_x\big(-\r \psi_x + F + \dot{X}\rhotil_x\big)\,dx.
\]
By the definition of $F$ in \eqref{def:FG}, we have
\begin{equation*}
    |F| \leq C|\util_x|\big| (\phi, \psi)\big|.
\end{equation*}
Using \eqref{small-X} and Young inequality, we obtain 
\begin{align*}
    |I_2| &\leq C   D_{u1}  +    \intRp |\psi_x||\util_x|\big| (\phi, \psi)\big|\,dx +   |\dot{X}| \intRp |\psi_x||\rhotil_x|\,dx \\
    &\leq  C   D_{u1}  +  \intRp |\util_x| |(\psi, \phi)|^2\,dx  + C\d   |\dot{X}|^2 .
\end{align*}
Moreover, we can control the quadratic term $|\util_x| |(\psi, \phi)|^2$ as follows:
\begin{equation}\label{phi quad}
\begin{aligned}
      \intRp |\util_x| |(\psi, \phi)|^2 dx &\leq   \intRp\left(\left(\r - \rhotil - \frac{\rhotil}{(\s - \util)}(u-\util) \right)^2+  C|(u-\util)|^2\right) |\util_x| dx \\
    &\leq C\sqrt{\d}  \Gnew   + C   G^S .
\end{aligned}
\end{equation}
Therefore, we obtain
\begin{align*}
    |I_2| \leq C   D_{u1}  + C\sqrt{\d}  \Gnew  + C  G^S   + C\d   |\dot{X}|^2 .
\end{align*}
$\bullet$ (Estimates of $I_3$)
Recall from \eqref{NS-pert} and \eqref{def:FG} that $\phi$ satisfies
\[
    \phi_t = -u \phi_x - \r \psi_x -\psi \rhotil_x - \phi \util_x + \dot{X}\rhotil_x.
\]
This implies that
\begin{equation}\label{est:phit}
    |\phi_t| \leq C\big[|\psi_x| + |\phi_x| + |\psi \rhotil_x| + |\phi \util_x| + |\dot{X}||\rhotil_x| \big]
\end{equation}
For $I_3$, we use \eqref{est:phit} and Young inequality to have
\begin{equation*}
    \begin{aligned}
        \int_0^t |I_3|\,ds &= \left|\int_0^t \left. \psi \phi_t \right|_{x=0}\,ds \right| \leq C\left[\int_0^t  |\psi \psi_x| \Big|_{x=0}\,ds + \int_0^t |\psi \phi_x| \Big|_{x=0}\,ds + \int_0^t \psi^2 |\rhotil_x| \Big|_{x=0}\,ds \right.\\
        &\phantom{|I_4| = \left|\int_0^t \left. \psi \phi_t \right|_{x=0}\,ds \right| \leq C[\int_0^t  |\psi|} \left. +\int_0^t |\psi \phi||\util_x|\Big|_{x=0}\,ds + \int_0^t |\dot{X}||\psi||\rhotil_x|\Big|_{x=0}\,ds  \right]\\
        &\leq C\int_0^t  |\psi \psi_x| \Big|_{x=0}\,ds + \frac{1}{40}\int_0^t G^{\text{bd}}\,ds + C\int_0^t |\psi|^2\Big|_{x=0}\,ds + C\e^2 \int_0^t |\util_x|\Big|_{x=0}\,ds 
    \end{aligned}
\end{equation*}
Following the similar calculations as in Lemma \ref{lem:bd} with Lemma \ref{lem:VS}, we find that
\[
    \int_0^t|I_3|\,ds \leq \frac{1}{40}\int_0^t G^{\text{bd}}\,ds + C\d^{1/3}e^{-C\d \b} + C\e^2 \int_0^t D_{u2}\,ds + C\e e^{-C\d \b} + C\e^2 \d e^{-C\d \b}.
\]
$\bullet$ (Estimates of $I_4$)
By the definition of $G$ in \eqref{def:FG}, observe that
\begin{equation}\label{est:G}
|G| \leq C|\util_x||(\phi, \psi)|.
\end{equation}
Using \eqref{phi quad} and Young inequality, we get
\begin{equation*}
\begin{aligned}
 \intRp \left|\frac{\phi_x}{\r}G\right|dx \leq C \intRp |\util_x||(\phi, \psi)||\phi_x|dx \leq 
\frac{1}{40}D_\r + C\sqrt{\d}\Gnew + CG^S.
\end{aligned}
\end{equation*}
In order to estimate $\intRp \frac{\phi_x}{\r}F_2 \,dx$, we use \eqref{I1 estimate}, \eqref{est:F1} and \eqref{phi quad} to have
\begin{equation}\label{est:F2}
\begin{aligned}
    \left|  \intRp \frac{\phi_x}{\r}F_2 \,dx \right|  &\leq \left|  \intRp \frac{\phi_x}{\r}\frac{u_x\phi_x}{\r}\,dx \right| + \left|  \intRp \frac{\phi_x}{\r}\frac{F_1}{\r} \,dx \right|\\
    &\leq C\e  (D_{u1}+ D_{u2})  + C(\d^2+\e)  D_\r  + C\d   \intRp |\phi_x||(\phi, \psi)||\util_x|\,dx \\
    &\qquad + C  \intRp |\phi_x||(\phi_x, \psi_x)||\util_x|\,dx  +C  \intRp \phi_x^2|\psi_x|\,dx \\
    &\leq \frac{1}{40}D_\r + C\d\big(\sqrt{\d}\Gnew + G^S \big) + C\d^2(D_\r + D_{u1}) + C\e (D_{u1}+ D_{u2}) .
\end{aligned}
\end{equation}
Thus, we obtain
\[
|I_4| \leq   \left(\frac{1}{20} + C\d^2 \right)D_\r + C\sqrt{\d}\Gnew + CG^S + C(\d^2 + \e)D_{u1} + C\e D_{u2} .
\]
$\bullet$ (Estimates of $I_5$)
The $I_5$ term can be estimated as follows:
\begin{align*}
    |I_5| &\leq C  |\dot{X}|\intRp |\phi_x||\util_x| \,dx   \leq   |\dot{X}|^2\intRp |\util_x|\,dx  + C  \intRp |\phi_x|^2|\util_x|\,dx \\
    &\leq \d   |\dot{X}|^2  + C\d^2   D_\r .
\end{align*}
Combining all those estimates and integrating \eqref{est:psix} with respect to $t$, we obtain
\begin{equation}\label{phi x estimate}
    \begin{aligned}
        &\|\phi_x\|_{L^2(\Rp)}^2 + \int_0^t D_\r \,ds+ \int_0^t G^{\text{bd}}\,ds\\
        &\, \leq C_1 \left[\int_0^t \d |\dot{X}|^2\,ds + \int_0^t G^S\,ds + \int_0^t D_{u1}\,ds + \intRp \psi^2\,dx \right]\\
        &\quad  +C\left[\|\phi_x(0,x)\|_{L^2(\Rp)}^2 + \|(\phi, \psi)(0,x)\|_{L^2(\Rp)}^2 + \sqrt{\d}\int_0^t \Gnew\,ds + \e \int_0^t D_{u2}\,ds + e^{-C\d \b}  \right]
    \end{aligned}
\end{equation}
For some positive constant $C_1$. Multiplying \eqref{phi x estimate} by $\frac{1}{2 \max\{1, C_1\}}$ and adding it to \eqref{est:zero}, we obtain the desired result in Lemma \ref{lem:phix}.
\end{proof}
Now, we will complete the proof of Proposition \ref{p-est} by using the following lemma.
\begin{lem}\label{lem:psix}
    Under the hypothesis in \eqref{p-est}, we have
    \[
        \begin{aligned}
            &\|(\r - \rhotilX)\|_{H^1(\Rp)}^2 +  \|(u - \utilX)\|_{H^1(\Rp)}^2 \\
            &\quad +\int_0^t (\d |\dot{X}|^2 + \Gnew + G^S + G^{\text{bd}} + D_\r + D_{u1} + D_{u2})\,ds\\
            &\leq C\left(\|(\r - \rhotil^{\b})(0,\cdot)\|_{H^1(\Rp)}^2 +  \|(u - \util^{\b})(0,\cdot)\|_{H^1(\Rp)}^2\right) +Ce^{-C\d \b},
        \end{aligned}
    \]
    where $\Gnew, G^S, G^{bd}, D_\r, D_{u1}$ and $D_{u2}$ are defined in \eqref{terms}.
    \end{lem}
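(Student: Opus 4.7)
The aim is to close the $H^1$ estimate by producing $\int_0^t D_{u2}\,ds$ on the left-hand side and absorbing the $C\e\int_0^t D_{u2}\,ds$ term on the right-hand side of \eqref{est:rhox}. The natural multiplier is $\psi_{xx}$: I test the momentum equation \eqref{NS-pert}$_2$ against $\psi_{xx}$ and integrate over $\mathbb{R}_+$. Integrating by parts on $\intRp\rho\psi_t\psi_{xx}\,dx$ (twice) and using the continuity equation to re-express $\rho_t$, one obtains an identity of the schematic form
\begin{equation*}
\frac{1}{2}\frac{d}{dt}\intRp\rho\psi_x^2\,dx+\intRp\psi_{xx}^2\,dx-\frac{1}{2}\rho(t,0)u_-\psi_x^2(t,0)=-\rho(t,0)\psi_t(t,0)\psi_x(t,0)+\mathcal{N}(t),
\end{equation*}
where $\mathcal{N}(t)$ collects the interior contributions $\intRp\rho_x\psi_t\psi_x\,dx$, $\intRp\rho u\psi_x\psi_{xx}\,dx$, $\intRp p'(\rho)\phi_x\psi_{xx}\,dx$, $\intRp G\psi_{xx}\,dx$, and $\dot{X}\intRp\rho\util_x\psi_{xx}\,dx$. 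Because $u_-<0$, the term $-\frac{1}{2}\rho(t,0)u_-\psi_x^2(t,0)$ is a nonnegative boundary contribution that stays on the left-hand side; this is the $\psi_x$-analogue of the good boundary term $G^{\mathrm{bd}}$.

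Each piece of $\mathcal{N}(t)$ is then dispatched by Young's inequality with a small weight. In $\intRp\rho_x\psi_t\psi_x\,dx$ I would first use \eqref{NS-pert}$_2$ to rewrite $\psi_t$ in terms of $\psi_{xx}/\rho$ plus quantities already controlled by the zeroth-order energy; the remaining products have the structure (bounded factor)$\times$(quantity already in the diffusion)$\times\psi_{xx}$, so they yield at worst $\eta D_{u2}$ for arbitrarily small $\eta$ together with absorbable multiples of $D_\rho$, $D_{u1}$, $\Gnew$, $G^S$, and $\d|\dot X|^2$. The nonlinear source $\intRp G\psi_{xx}\,dx$ is handled by the pointwise bound \eqref{est:G}, the quadratic estimate \eqref{phi quad}, and the a priori smallness \eqref{p-assum}, along the same lines already used for $F_2$ in \eqref{est:F2}.

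The main obstacle is the boundary term $-\rho(t,0)\psi_t(t,0)\psi_x(t,0)$. The saving fact is that $u(t,0)=u_-$ is constant, hence
\[
\psi_t(t,0)=-\util_t(t,0)=(\s+\dot X(t))\,\util_\xi(-\s t-X(t)-\b).
\]
By \eqref{small-X} we have $-\s t-X(t)-\b\le-\tfrac{\s}{2}t-\b$, and Lemma \ref{lem:VS} gives $|\util_\xi|\le C\d^2 e^{-C\d|\xi|}$, so $|\psi_t(t,0)|$ is exponentially small in both $\d t$ and $\d\b$. Coupling this with the interpolation inequality $|\psi_x(t,0)|\le\|\psi_x\|_{L^\infty(\Rp)}\le C\|\psi_x\|_{L^2(\Rp)}^{1/2}\|\psi_{xx}\|_{L^2(\Rp)}^{1/2}$ and Young's inequality, the time integral $\int_0^t|\rho(s,0)\psi_t(s,0)\psi_x(s,0)|\,ds$ is bounded by $Ce^{-C\d\b}$ plus an arbitrarily small multiple of $\int_0^t D_{u2}\,ds$.

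Integrating in time and adding a sufficiently large multiple $C_\star$ of \eqref{est:rhox} then finishes the argument: $C_\star$ is chosen to dominate the coefficients of $D_{u1}$, $G^S$, $\Gnew$, and $\d|\dot X|^2$ produced by $\mathcal{N}(t)$, and then $\e$ (and $\d$) are taken small enough that the residual $C\e\int_0^t D_{u2}\,ds$ is absorbed into the $\int_0^t D_{u2}\,ds$ now present on the left. This yields the $H^1$ bound stated in Lemma \ref{lem:psix}.
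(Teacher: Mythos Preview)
Your proposal is correct and follows essentially the same strategy as the paper: test the momentum equation against $\psi_{xx}$, extract $\tfrac{d}{dt}\|\psi_x\|_{L^2}^2$ and $D_{u2}$, control the boundary term via $\psi_t(t,0)=-\util_t(t,0)$ together with \eqref{small-X} and Lemma~\ref{lem:VS}, and then combine with \eqref{est:rhox} to absorb $C\e\int_0^t D_{u2}\,ds$.

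The one bookkeeping difference is that the paper multiplies by $-\psi_{xx}/\rho$ rather than $\psi_{xx}$. Dividing out $\rho$ first means the time-derivative term becomes simply $-\intRp\psi_t\psi_{xx}\,dx=\tfrac{1}{2}\tfrac{d}{dt}\|\psi_x\|_{L^2}^2+\psi_t(t,0)\psi_x(t,0)$, so the cross term $\intRp\rho_x\psi_t\psi_x\,dx$ never appears and no substitution for $\psi_t$ via \eqref{NS-pert}$_2$ is needed. Your route with multiplier $\psi_{xx}$ generates that cross term (and a harmless extra good boundary term $-\tfrac12\rho(t,0)u_-\psi_x^2(t,0)$), but your plan to eliminate it by rewriting $\psi_t$ through the equation is sound; all resulting pieces carry a small prefactor $(\e+\delta)$ and are absorbed exactly as you describe. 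So the two proofs differ only in this cosmetic choice of multiplier.
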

    \begin{proof}
    Multiplying \eqref{NS-pert}$_2$ by $-\frac{\psi_{xx}}{\r}$, and integrating with respect to $x$, we have
    \begin{equation}\label{est:psix}
        \begin{aligned}
        \frac{1}{2}\|\psi_x\|_{L^2(\Rp)}^2  + \intRp \frac{|\psi_{xx}|^2}{\r}\,dx &= - \psi_t(t,0) \psi_x(t,0) + \intRp \left(\frac{p'(\r)}{\r}\phi_x + u \psi_x \right)\psi_{xx}\,dx\\
        &\quad -\intRp \frac{G \psi_{xx}}{\r}\,dx - \dot{X} \intRp \psi_{xx}\util_x\,dx\\
        &=: J_1 +J_2 + J_3 +J_4.
        \end{aligned}
    \end{equation}
    Here, we use the identity:
    \[
        -\intRp \psi_t \psi_{xx} = \frac{1}{2}\frac{d}{dt}\|\psi_x\|_{L^2(\Rp)}^2 + \psi_t(t,0) \psi_x(t,0).
    \]
    To estimate the $J_1$ term, observe that
    \begin{equation}\label{u t boundary}
            |(u-\util)_t (t,0)| = |\util_t(t,0)| \leq C |\util'(-\s t - X(t) - \b)||\s + \dot{X}(t)| \leq C \d^2 e^{-C\d(\s t + \b)}.
    \end{equation}
    Using Young inequality and \eqref{u t boundary} , we obtain
    \begin{equation}\label{J_1 estimate}
        \begin{aligned}
            \int_0^t|J_1|\,ds &= \left|\int_0^t (\util_t \psi_x)(s,0)\,ds \right| \leq C\int_0^t \|\psi_x\|_{L^\infty(\Rp)}^2 |\util_t(s,0)|\,ds + \int_0^t |\util_t(s,0)|\,ds\\
            &\leq C\d^2 \int_0^t \|\psi_x\|_{L^2(\Rp)}\|\psi_{xx}\|_{L^2(\Rp)}+\int_0^t |\util_t(s,0)|\,ds\\
            &\leq C\d^2\int_0^t (D_{u1}+D_{u2})\,ds + C\d e^{-C \d \b}.
        \end{aligned}
    \end{equation}
    For $J_2$, we use \eqref{phi quad} and Young inequality to have 
    \[
        |J_2| \leq \frac{1}{40} D_{u2} + C (D_\r + D_{u1}).
    \]
    From \eqref{est:G}, we can estimate $J_3$ term as follows:
    \[
       |J_3| \leq \intRp |\util_x||(\phi,\psi)||\psi_{xx}|\,dx \leq C\d^2 D_{u2} + C\sqrt{\d} \Gnew + C G^S . 
    \]
    Finally, for $J_4$ term, we get
    \[
        |J_4| \leq C|\dot{X}|\intRp |\util_x||\psi_{xx}|\,dx\leq C\d  |\dot{X}|^2 + C\d^2 \int_0^t D_{u2}.
    \]
    Using all these estimates, integrating \eqref{est:psix} with respect to $t$, and applying the fact:
    \[
        \intRp \frac{|\psi_{xx}|^2}{\r}\,dx \sim D_{u2},
    \]
    we obtain 
    \begin{equation}\label{psi x estimate}
        \begin{aligned}
            \|\psi_x\|_{L^2(\Rp)}^2 + \int_0^t D_{u2}\,ds &\leq C_2 \left(\int_0^t \d |\dot{X}|^2\,ds + \int_0^t G^S \,ds + \int_0^t D_\r\,ds + \int_0^t D_{u1}\,ds\right) \\
            &\quad + C\left(\|\psi_x(t,0)\|_{L^2(\Rp)}^2 + \sqrt{\d}\int_0^t \Gnew \,ds + e^{-C\d \b }\right) .
        \end{aligned}
    \end{equation}
    For some positive constant $C_2$. Therefore, multiplying \eqref{psi x estimate} by $\frac{1}{2 \max\{1, C_2\}}$ and adding it to \eqref{est:rhox}, we obtain the desired result in Lemma \ref{lem:psix}.
    \end{proof}

\appendix

\section{Proof of Theorem \ref{thm:main}}\label{Appenidx B}
\setcounter{equation}{0}
\subsection{Proof of global in time existence}
Here, we present the global-in-time existence of solutions to the outflow problem. Based on the local-in-time existence and a priori estimates, we use a standard continuation argument to obtain the global existence of our solution.

First, we choose smooth monotone functions $\rubar$ and $\uubar$ on $\Rp$ satisfying
\begin{align*}
    &\norm{\rubar - \r_+}_{L^2(\b, \infty)} + \norm{\rubar - \r_-}_{L^2(0, \b)}\\
    &\quad +\norm{\uubar - u_+}_{L^2(\b, \infty)} + \norm{\uubar - u_-}_{L^2(0, \b)} + \norm{\rubar_x}_{L^2(\Rp)} + \norm{\uubar_x}_{L^2(\Rp)} \leq \underline{C}\delta
\end{align*}
for some constant $\underline{C}$. Since $(\rhotilX, \utilX)(0,x) = (\rhotil, \util)(x-\b)$, we have 
\begin{align*}
    &\norm{\rubar(\cdot) - \rhotilX(0,\cdot)}_{H^1(\Rp)} + \norm{\uubar(\cdot) - \utilX(0,\cdot)}_{H^1(\Rp)}\\
    &\quad \leq \norm{\rubar - \r_+}_{L^2(\b, \infty)} + \norm{\rubar - \r_-}_{L^2(0, \b)}+\norm{\uubar - u_+}_{L^2(\b, \infty)} + \norm{\uubar - u_-}_{L^2(0, \b)}\\
    &\quad \quad +\norm{\rhotilX(0,\cdot) - \r_+}_{L^2(\b, \infty)} + \norm{\rhotilX(0,\cdot) - \r_-}_{L^2(0, \b)}\\
    &\quad \quad +\norm{\utilX(0,\cdot) - u_+}_{L^2(\b, \infty)} + \norm{\utilX(0,\cdot) - u_-}_{L^2(0, \b)}\\
    &\quad \quad + \norm{\rubar_x}_{L^2(\Rp)} + \norm{\rd_x \rhotilX(0,\cdot)}_{L^2(\Rp)} + \norm{\uubar_x}_{L^2(\Rp)} + \norm{\rd_x \utilX(0,\cdot)}_{L^2(\Rp)}\\
    &\leq \overline{C}\sqrt{\d}
\end{align*}
for some constant $\overline{C}$. Now, define positive constants $\e_0$ and $\e_*$ as
\[
\e_0 = \e_* - \underline{C}\d, \quad \e_* := \frac{\e}{2(C_0 + 1)} - \overline{C} \sqrt{\d} - e^{-C\d \b},
\]
where $\e$ and $C_0$ are constants defined in Theorem \ref{p-est}. Here, thanks to the smallness of $\d$ and $e^{-C\d \b}$, we can choose $\e_0$ and $\e_*$ as positive constants and $\e_* < \frac{\e}{2}$. Moreover, note that $\e_0$ can be chosen independently on $\d$, for example, as $\e_0 = \frac{\e}{4(C_0+1)}$.

Now, consider any initial data $(\r_0, u_0)$ that satisfies \eqref{initial perturbation}, that is, 
\[
    \norm{(\r_0, u_0) - (\r_+, u_+)}_{L^2(\b, \infty)} + \norm{(\r_0, u_0) - (\r_-, u_-)}_{L^2(0, \b)} + \norm{(\rd_x \r_0, \rd_x u_0)}_{L^2(\Rp)} < \e_0.
\]
This implies that 
\begin{align*}
    &\norm{\r_0 - \rubar}_{H^1(\Rp)} + \norm{u_0 - \uubar}_{H^1(\Rp)} \\
    &\quad \leq \norm{(\r_0, u_0) - (\r_+, u_+)}_{L^2(\b, \infty)} + \norm{(\rubar, \uubar) - (\r_+, u_+)}_{L^2(\b, \infty)}\\
    &\quad \quad  + \norm{(\r_0, u_0) - (\r_-, u_-)}_{L^2(0, \b)} + \norm{(\rubar, \uubar) - (\r_-, u_-)}_{L^2(0, \b)}\\
    &\quad \quad + \norm{(\rd_x \r_0, \rd_x u_0)}_{L^2(\Rp)} + \norm{(\rd_x \rubar, \rd_x \uubar)}_{L^2(\Rp)}\\
    &\quad \leq \e_0 + \underline{C}\d = \e_*.
\end{align*}
In particular, Sobolev inequality implies $\norm{\r_0 - \rubar}_{L^\infty(\Rp)} < C\e_*$. Therefore, for small enough $\e_*$, we obtain
\[
    \frac{\r_+}{2} < \r_0(x) < 2\r_+, \quad x \in \Rp.
\]
Hence, by Theorem \ref{prop:loc}, there exists $T_0>$ such that the system \eqref{NS} admits a unique solution $(\r, u)$ on $[0,T_0]$ satisfying 
\begin{equation}\label{Local smallness 1}
    \norm{(\r - \rubar, u - \uubar)}_{L^\infty(0,T_0;H^1(\Rp))} \leq \frac{\e}{2},
\end{equation}
and
\[
\frac{\r_-}{3} < \r(t,x) < 3\r_+, \quad t>0, \quad x \in \Rp.
\]
Then, observe that 
\begin{align*}
    &\norm{\rubar - \rhotilX}_{H^1(\Rp)} + \norm{\uubar - \utilX}_{H^1(\Rp)}\\
    &\quad \leq  \norm{(\rubar, \uubar) - (\r_+, u_+)}_{L^2(\b, \infty)} + \norm{(\rhotilX, \utilX) - (\r_+, u_+)}_{L^2(\b, \infty)} \\
    &\quad \quad  + \norm{(\rubar, \uubar) - (\r_-, u_-)}_{L^2(0, \b)} + \norm{(\rhotilX, \utilX) - (\r_-, u_-)}_{L^2(0, \b)}\\
    &\quad \quad \norm{(\rd_x \rubar, \rd_x \uubar)}_{L^2(\Rp)} + \norm{(\rd_x \rhotilX, \rd_x \utilX)}_{L^2(\Rp)}\\
    &\quad \leq C\sqrt{\d}\left(1+ \sqrt{|X(t)|}\right) \leq C\sqrt{\d}(1+\sqrt{t}).
\end{align*}
Here, the details of the above inequality can be found in \cite{KVW23}. Therefore, if we choose $\d>0$ and $T_1 \in (0,T_0)$ small enough so that
\[
C\sqrt{\d}(1+\sqrt{T_1}) < \frac{\e}{2},
\]
we get
\[
    \norm{(\rubar - \rhotilX, \uubar - \utilX)}_{L^\infty(0,T_1;H^1(\Rp))} \leq \frac{\e}{2}.
\]
Combining with the estimate \eqref{Local smallness 1}, we have 
\[
    \norm{(\r - \rhotilX, u - \utilX)}_{L^\infty(0,T_1;H^1(\Rp))} \leq \e.
\]
Moreover, since the shift function $X(t)$ is Lipschitz, and 
\[
\r - \rubar, \, \, u - \uubar \in C([0,T_1];H^1(\Rp)),
\]
we obtain $\r - \rhotilX, \, \, u - \utilX \in C([0,T_1];H^1(\Rp))$.

Until now, we proved that \eqref{p-assum} holds in the time interval $[0,T_1]$. From now on, we will show that the solution can be globally extended by using the continuation argument.

Consider the maximal existence time:
\[
T_M := \sup \left\{t>0 : \sup_{t \in [0,T]} \left(\norm{(\r - \rhotilX, u - \utilX)}_{H^1(\Rp)} \right) \leq \e \right\}
\]
We will show that $T_M = \infty$. If $T_M < \infty$, then we have
\begin{equation}\label{conti argument}
    \sup_{t \in [0,T_M]} \left(\norm{(\r - \rhotilX, u - \utilX)}_{H^1(\Rp)} \right) = \e.
\end{equation}
On the other hand, since 
\begin{align*}
    &\norm{(\r_0, u_0)(\cdot) - (\rhotilX, \utilX)(0,\cdot)}_{H^1(\Rp)}\\
    &\quad \leq \norm{(\r_0, u_0) - (\rubar, \uubar)}_{H^1(\Rp)} + \norm{(\rubar, \uubar)(\cdot) - (\rhotilX, \utilX)(0,\cdot)}_{H^1(\Rp)}\\
    &\quad < \e_* + \overline{C}\sqrt{\d},
\end{align*}
a priori estimate \eqref{est:pri} yields that 
\begin{align*}
    &\sup_{t \in [0,T_M]} \left(\norm{(\r - \rhotilX, u - \utilX)}_{H^1(\Rp)} \right) \\
    &\qquad \leq C_0 \left( \| \r_0 - \rhotil^{\beta} \|_{H^1(\mathbb{R}_+)} + \| u_0 - \util^{\beta} \|_{H^1(\mathbb{R}_+)} \right) + C_0 e^{-C\delta \beta}\\
    &\qquad \leq C_0(\e_* + \overline{C} \sqrt{\d}) + C_0 e^{-C\d \b} < \frac{\e}{2},
\end{align*}
which contradicts to \eqref{conti argument}.

Hence, we obtain $T_M = +\infty$. Moreover, a priori estimate holds for the whole time interval:
\begin{equation}\label{global in time estimate}
    \begin{aligned}
        &\sup_{t >0} \left[ \| \r - \rhotil^{X, \beta} \|_{H^1(\mathbb{R}_+)} + \| u - \util^{X, \beta} \|_{H^1(\mathbb{R}_+)}\right] \\
        &\qquad \qquad + \sqrt{\int_0^\infty \left(\delta |\dot{X}|^2 + G^{\text{new}} + G^S + D_\r + D_{u1} + D_{u2}\right)\,ds}\\
        &\qquad \leq C_0 \left( \| \r_0 - \rhotil^{\beta} \|_{H^1(\mathbb{R}_+)} + \| u_0 - \util^{\beta} \|_{H^1(\mathbb{R}_+)} \right) + C_0 e^{-C\delta \beta}.
        \end{aligned}
\end{equation}
Furthermore, for all $t>0$, we have 
\begin{equation}\label{est:X}
    |\dot{X}(t)| \leq C_0\left(\| \r - \rhotil^{X, \beta} \|_{L^\infty(\mathbb{R}_+)} + \| u - \util^{X, \beta} \|_{L^\infty(\mathbb{R}_+)}  \right) \leq C\e.
\end{equation}
\subsection{Time asymptotic behavior}
Now, we show the long-time behavior \eqref{asym-U} of the perturbation  by using the global-in-time estimate \eqref{global in time estimate}.

For that, we define $g(t)$ by
\[
    g(t) := \|(u-\utilX)_x\|_{L^2(\Rp)}^2 + \|(\r-\rhotilX)_x\|_{L^2(\Rp)}^2.
\]
We want to show that  
\[
   \int_0^\infty |g(t)| + |g'(t)| \,dt < + \infty. 
\]
This implies that 
\[
    g(t) \to 0 \quad \text{as} \quad t \to \infty.
\]
Therefore, the Gagliardo-Nirenberg interpolation yields 
\[
    \|(u - \utilX)\|_{L^\infty(\Rp)} \leq C \underbrace{\|(u - \utilX)\|_{L^2(\Rp)}^{1/2}}_{\text{Bounded}} \underbrace{\|(u - \utilX)_x\|_{L^2(\Rp)}^{1/2}}_{\to 0} \to 0 \quad \text{as} \quad t \to \infty. 
\]
Similarly, we have 
\[
    \|(\r - \rhotilX)\|_{L^\infty(\Rp)} \to 0 \quad \text{as} \quad t \to \infty.
\]
In what follows, we will prove $g \in W^{1,1}(0,\infty)$. First, note that 
\begin{align*}
    \int_0^\infty |g(t)| dt 
    \le C\int_0^\infty (D_\r + D_{u1})dt < \infty.
\end{align*}
Therefore, we have $g \in L^1(0,\infty)$. Next, observe that 
\begin{align*}
    \frac{1}{2}\int_0^\infty |g'(t)|\,dt &\leq \int_0^\infty \left|  \intRp (u-\util)_x (u-\util)_{xt} \,dx \right|\,dt  + \left|  \intRp (\r-\rhotil)_x (\r-\rhotil)_{xt} \,dx  \right|\,dt \\
    &=: I_1 + I_2.
\end{align*}
For $I_1$, following the calculations in \eqref{J_1 estimate}, we have
\begin{align*}
    |I_1| &\leq \int_0^\infty  \left|(u-\util)_x (u-\util)_{t} \right|\Big|_{x=0}\,dt + \int_0^\infty \left| \intRp (u-\util)_{xx}(u-\util)_t\,dx\right|\,dt\\
     &\leq C+ C\int_0^\infty (D_{u1} + D_{u2})\,dt + C\int_0^\infty \intRp |(u-\util)_t|^2\,dx\,dt.
\end{align*}
From \eqref{NS-pert}$_2$ and \eqref{est:G}, we obtain
\begin{align*}
    \int_0^\infty \intRp |(u-\util)_t|^2\,dx\,dt &\leq C\int_0^{\infty} \left[D_{\r} + D_{u1} + D_{u2} + G^S + \Gnew\right]\,dt  + C\delta \int_0^\infty |\dot{X}|^2\,dt \\
    &< \infty.
\end{align*}
For $I_2$ term, recall from \eqref{eq:phitx} and \eqref{est:F1} that $\r -\rhotil = \phi$ satisfies
\[
\phi_{tx} = -u \phi_{xx} - \rho \psi_{xx} + F_1 + \dot{X} \rhotil_{xx},
\]
and $F_1$ satisfies 
\[
|F_1| \leq C\big(\d|\util_x||(\psi, \phi)| + |(\phi_x, \psi_x)||\util_x| + |\phi_x \psi_x|\big).
\]
Thus, we have
\begin{align*}
    |I_2| = \int_0^\infty \left|\intRp \phi_x \phi_{xt}\,dx \right|\,dt &\leq \int_0^\infty \left|\intRp u\phi_{xx} \phi_{x}\,dx \right|\,dt + \int_0^\infty \left|\intRp \r \psi_{xx} \phi_{x}\,dx \right|\,dt\\
    &\quad + \int_0^\infty \left|\intRp F_1 \phi_{x}\,dx \right|\,dt + \int_0^\infty |\dot{X}|\left|\intRp \phi_x  \rhotil_{xx} \right|\,dt\\
    &=: J_1 + J_2 + J_3 + J_4.
\end{align*}
Following the arguments in \eqref{I1 estimate}, the $J_1$ term can be estimated as follows:
\begin{align*}
    |J_1| &\leq \int_0^\infty \left|\frac{1}{2}u(\phi_{x})^2\Big|_{x=0} \right|\,dt + \int_0^\infty \left| \intRp \frac{1}{2}u_x (\phi_x)^2\,dx\right|\,dt\\
    &\leq C \int_0^\infty \left[ G^{bd} + D_{u1} + D_{u2} + D_\r\right]\,dt.
\end{align*}
For $J_2$ term, we get
\[
|J_2| \leq C\int_0^\infty (D_\r + D_{u2})\,dt.
\]
Next, by repeating the process in \eqref{est:F2}, we find that
\begin{align*}
    |J_3|
    \leq C \int_0^\infty \big( D_\r + D_{u1} + D_{u2} + G^{new} + G^S \big)\,dt.
\end{align*}
Finally, using Lemma \ref{lem:VS}, we have
\begin{align*}
    |J_4| &\leq C\int_0^\infty |\dot{X}| \intRp |\phi_x||\util_x| \,dx\,dt \leq C\int_0^\infty  \intRp \left(|\phi_x|^2 + |\dot{X}|^2|\util_x|^2\right)\,dx\,dt.\\
    &\leq  C \delta \int_0^\infty |\dot{X}|^2\,dt + C\int_0^\infty D_\r\,dt.
\end{align*}
Hence, we conclude that
\[
\int_0^\infty |g'(t)|\,dt \leq C\int_0^\infty \big(G^{new} + G^S + \d |\dot{X}|^2 + D_{u1} + D_{u2} + D_\r + G^{bd})\,dt < \infty.
\]
\subsection{Proof of \eqref{asym-X}}
Finally, the asymptotic behavior \eqref{asym-X} of $\dot{X}(t)$ easily follows from \eqref{asym-U} and \eqref{est:X} as follows:
\[
|\dot{X}(t)| \leq C_0\left(\| \r - \rhotil^{X, \beta} \|_{L^\infty(\mathbb{R}_+)} + \| u - \util^{X, \beta} \|_{L^\infty(\mathbb{R}_+)}  \right) \to 0 \quad \text{as} \quad t \to 0.
\]
\qed
\vspace{0.5cm}
\bibliography{reference.bib}
\end{document}